\newtheorem{theorem}{Theorem}[section]
\newtheorem{Cor}{Corollary}[section]
\newtheorem{Prop}{Proposition}[section]
\newtheorem{Def}{Definition}[section]
\newtheorem{Lem}{Lemma}[section]
\newtheorem{Exa}{Example}[section]
\newtheorem{proof}{Proof.}
\numberwithin{equation}{section}
\title{Transcendence of digital expansions and continued fractions generated by      
a cyclic permutation and $k$-adic expansion }
\author{Eiji Miyanohara\\
Major in Pure and Applied Mathematics\\
Graduate School of Fundamental Science and Engineering\\
Waseda University\\
3-4-1 Okubo, Shinjuku\\
Tokyo 169-8555, Japan\\
E-mail: j1o9t5acrmo@fuji.waseda.jp\\}
\date{\today}
\begin{document}

\maketitle

\begin{abstract}
In this article, first we generalize the Thue-Morse sequence $(a(n))_{n=0}^\infty$ (the generalized Thue-Morse sequences) by a cyclic
permutation and $k$ -adic expansion of natural numbers, and consider the necessary-sufficient condition that it is non-periodic.
Moreover we will show that, if the generalized Thue-Morse sequence  is not periodic,  
then all equally spaced subsequences $(a(N+nl))_{n=0}^\infty$ (where $N \ge 0$ and  $l >0$) of the generalized Thue-Morse sequences are not periodic. 
Finally we apply the criterion of [ABL], [Bu$1$] on transcendental numbers, to find that , for a non periodic generalized Thue-Morse sequences taking the values on $\{0,1,\cdots,\beta-1\}$(where $\beta$ is an integer greater than $1$), the series $\sum_{n=0}^\infty a(N+nl) {\beta}^{-n-1}$ gives a transcendental number, and further that for non periodic generalized Thue-Morse sequences taking the values on positive integers, the continued fraction $[0:a(N), a(N+l),\cdots,a(N+nl ), \cdots]$ gives a transcendental number, too.
\end{abstract}

\begin{center}\section{Introduction}\label{s:1}\end{center}
 In this paper we find many transcendental real numbers
defined by using digit counting: Let $k$ be an integer greater than $1$. We define  the $k$-adic expansion of natural number $n$ as follows,
\begin{align}
n=\sum_{q=1}^{finite}  s_{n,q} k^{w_n(q)},
\end{align}
where $1\le s_{n,q}\le k-1, w_n(q+1)>w_n(q)\ge 0$.
 Let $e_s(n)$ denote the number of counting of $s$ $($where $s \in \{1,\cdots,k-1\}$ $)$ in the
base $k$ representation of $n$. For an integer $L$ greater than $1$,
we define the sequence $(e_s^L(n))_{n=0}^\infty$ by
\begin{align}
 e_s^L(n) \equiv e_s(n) \quad \pmod{L},
\end{align}
where $0\le e_s^L(n)\le L-1$, $e_s(0)=0$. Specifically $(e_1^2(n))_{n=0}^\infty$ (where $k=2$) is known as the Thue-Morse sequence.
\newline Now we introduce a new sequence as follows. Let $K$ be a map, 
\[
     K : \{1,\dots, k-1 \} \longrightarrow \{0,1, \dots, L-1 \}.
\]
We put $(a(n))_{n=0}^\infty$ by
\[
     a(n) \equiv \sum_{s=1}^{k-1} K(s)e_s^L(n) \pmod{L},
\]
where $0\le a(n)\le L-1$.
Then the authors [MM], [AB1] proved the following result.
\begin{theorem}[MM-AB1]\label{}
Let $\beta \ge L$ be an integer. Then $\sum_{n=0}^\infty \frac{a(n)}{ {\beta}^{n+1}}$ is a transcendental number unless 
\begin{align}
               s K(1) \equiv K(s) \pmod{L}\; \;  for\;all\; 1\le  s \le k-1 \; \; and \; \;   K(k-1)\equiv 0 \pmod{L}.
\end{align}
\end{theorem}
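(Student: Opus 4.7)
\emph{Proof proposal.} My plan is to split on whether the exceptional congruences (3) hold, handle the ``unless'' direction by a direct computation, and reduce the transcendence claim to the combinatorial criterion of \textup{[ABL]}, \textup{[Bu1]}.

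First, from the base-$k$ identity $e_s(kn+r)=e_s(n)+\delta_{s,r}$ I would derive the fundamental recursion $a(kn+r)\equiv a(n)+K(r)\pmod L$ with the convention $K(0):=0$. If (3) holds, iterating this gives $a(n)\equiv K(1)\sigma_k(n)\pmod L$, where $\sigma_k(n)$ denotes the base-$k$ digit sum. The carry identity $\sigma_k(n+1)-\sigma_k(n)=1-(k-1)c(n)$ together with $(k-1)K(1)\equiv K(k-1)\equiv 0\pmod L$ then collapses this to $a(n+1)\equiv a(n)+K(1)\pmod L$, so the sequence is an eventually (in fact purely) periodic arithmetic progression modulo $L$ and the series is rational; this is the ``unless'' exception.

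Assuming (3) fails, I would invoke the combinatorial transcendence criterion: a bounded, non-eventually-periodic sequence whose prefixes admit infinitely many factorizations $U_n V_n V_n'$ with $V_n'$ a prefix of $V_n$, $|V_n'|\ge\varepsilon|V_n|$, and $|U_n|/|V_n|$ uniformly bounded, produces a transcendental value $\sum a(n)\beta^{-n-1}$ for $\beta\ge L$. For the stammering I would exploit the $k$-automatic self-similarity: writing $V_m$ for the length-$k^m$ prefix and $V_m^{(r)}$ for its pointwise shift by $K(r)\bmod L$, the recursion shows that the prefix of length $(k+1)k^m$ factors as $V_m\,V_m^{(1)}V_m^{(2)}\cdots V_m^{(k-1)}\,V_m^{(1)}$. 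Choosing $U=V_m$, $V=V_m^{(1)}V_m^{(2)}\cdots V_m^{(k-1)}$, $V'=V_m^{(1)}$ yields $|U|/|V|=|V'|/|V|=1/(k-1)$ with $|V|\to\infty$, satisfying the hypotheses of the criterion.

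The principal obstacle is verifying non-eventual-periodicity when (3) fails. I would argue contrapositively: assume $(a(n))$ is eventually periodic with period $p$, derive the multiplicative-type identity $a(k^m n+r)\equiv a(n)+a(r)\pmod L$ for $r<k^m$, and then test the period at points of the form $k^m s$ and $k^m s + r$ (for varying $s\in\{1,\dots,k-1\}$ and $m$ large). The resulting system of congruences, once enough choices of $s$ and $m$ are probed, should force both $sK(1)\equiv K(s)$ for all $s$ and $K(k-1)\equiv 0\pmod L$, yielding (3) and the desired contradiction. The delicate part is organizing the arithmetic so that every candidate period is ruled out uniformly; an alternative, if this becomes awkward, is to appeal to Cobham-type rigidity for periodic $k$-automatic sequences, which constrains their substitutive structure sharply enough that a direct digit analysis forces (3).
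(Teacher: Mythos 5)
Your exceptional-case computation and your stammering factorization are both correct, and they follow essentially the same route as the paper (which obtains Theorem MM-AB1 as the special case $N=0$, $l=1$, $\kappa(s,y)=K(s)$ of its Theorem 4.2, feeding non-periodicity into the [ABL]/[Bu1] criterion). Indeed, in this special case your factorization $V_m V_m^{(1)}\cdots V_m^{(k-1)}V_m^{(1)}$ with $w=1+\tfrac{1}{k-1}$ is cleaner than the paper's pigeonhole argument, which has to work for arbitrary arithmetic subsequences.

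The genuine gap is the step you yourself flag as delicate: proving that failure of (3) implies the sequence is not eventually periodic. This is the heart of the theorem, and your sketch does not close it. From eventual periodicity with period $p$ your probing at $k^m s$ and $k^m s + r$ only yields usable congruences when $r$ is a multiple of $p$ that is digit-disjoint from $k^m s$ (adding $p$ itself creates carries), and what you then get is $a(r)\equiv 0 \pmod{L}$ for all multiples $r$ of $p$. Passing from ``$a$ vanishes on all multiples of $p$'' to the specific congruences $sK(1)\equiv K(s)$ and $K(k-1)\equiv 0 \pmod{L}$ requires producing multiples of $p$ with prescribed base-$k$ digit patterns, and this is exactly the paper's key Lemma 3.1: for every $l$ and every $t$ there is a multiple of $l$ whose lowest nonzero digit equals $1$ and is followed by a gap of length greater than $t$, with the position of that lowest digit fixable independently of $t$ (the proof needs the prime factorization of $l$ relative to $k$, Bezout, and a separate treatment of the case $G=1$). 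Only with such multiples in hand can one run the digit-disjoint multiplicativity $a(n+m)=a(n)a(m)$ (Proposition 3.1 in the paper, in the root-of-unity encoding) to force relations like $a(k^r(s+1))=a(k^r)a(k^r s)$ and hence the geometric form of the coefficients, which in the constant-$\kappa$ case is precisely (3) (Corollary 3.1). Your claim that ``enough choices of $s$ and $m$ should force'' (3) is not an argument for general $p$ (in particular for $p$ sharing prime factors with $k$), and the fallback to ``Cobham-type rigidity of periodic $k$-automatic sequences'' does not produce the specific congruence characterization either; so as written the transcendence direction rests on an unproved non-periodicity claim.
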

Now we define the generalized Thue-Morse sequences as follows: 
Let  $d(n; s k^y)$ (where $s \in \{1,\cdots,k-1\}$ and $y$ is a non negative integer)  be $1$ or $0$, $d(n; s k^y)=1 $ if and only if there exists an integer
$q$ such that $s_{n,q} k^{w_n(q)}= s k^y$. Let $\kappa$ be a map, 
\[
     \kappa : \{1,\dots, k-1 \} \times \mathbb{N} \longrightarrow \{0,1, \dots, L-1 \}.
\]
where $\mathbb{N}$ denotes the set of non-negative integers (or, natural numbers).
We put  $(a(n))_{n=0}^\infty$ by
\begin{align}
    a(n) \equiv \sum_{y=0}^\infty \sum_{s=1}^{k-1} \kappa (s, y)d(n; s k^y) \pmod{L},
\end{align}
 where $0\le a(n)\le L-1$, $a(0)=0$. In this article we will call $(a(n))_{n=0}^\infty$ the generalized Thue-Morse sequences of type $(L,k,\kappa)$.
In this article, we generalize Theorem $1.1$ as follows.
\begin{theorem}
Let $\beta \ge L$ be an integer. Then $\sum_{n=0}^\infty \frac{a(N+nl)}{ {\beta}^{n+1}}$ $($ where $N \ge 0$ and  $l >0$ $)$ is a transcendental number unless there exists an integer $A$ such that
\begin{align}
\kappa(s,A+y) \equiv \kappa (1,A) s k^y    \pmod{L},
\end{align}
for all $1\le s \le k-1$ and for all $ y \in \mathbb{N}$.
\end{theorem}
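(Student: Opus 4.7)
The plan is to split on condition $(1.5)$. If $(1.5)$ holds, I show $(a(n))$ is periodic, which forces the series $\sum a(N+nl)\beta^{-n-1}$ to be rational, hence not transcendental. Writing $n = m + k^A\lfloor n/k^A\rfloor$ with $m = n\bmod k^A$, the hypothesis $\kappa(s,A+y)\equiv \kappa(1,A)\,s k^y\pmod L$ telescopes the definition $(1.4)$ into
\[
 a(n)\equiv \sum_{y<A}\kappa(s_{m,y},y) + \kappa(1,A)\lfloor n/k^A\rfloor \pmod L,
\]
and the right-hand side is periodic in $n$ with period dividing $Lk^A$; in particular $(a(N+nl))$ is periodic and the series is rational.

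The harder converse is that when $(1.5)$ fails, the series is transcendental. My strategy is to invoke the combinatorial transcendence criterion of [ABL], [Bu$1$]: if a sequence $(b_n)\subset\{0,\dots,\beta-1\}$ is not eventually periodic but its prefixes contain infinitely many ``stammers'' of the form $UV^{1+\varepsilon}$ with length comparable to $|UV|$, then $\sum b_n\beta^{-n-1}$ is transcendental (and the assumption $\beta\ge L$ guarantees the digits are legitimate base-$\beta$ digits). Hence two ingredients are required: non-eventual-periodicity of $(a(N+nl))$ and explicit long prefix repetitions.

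For non-eventual-periodicity, I first handle the case $N=0,\ l=1$ by a digit-lifting argument: using $a(sk^y)=\kappa(s,y)$ for single-digit arguments and controlling additions at positions where no carry occurs, an eventual period for $(a(n))$ forces from some $A$ onward both the multiplicativity $\kappa(s,y)\equiv s\kappa(1,y)\pmod L$ and the geometric relation $\kappa(1,y+1)\equiv k\kappa(1,y)\pmod L$, which together give $(1.5)$. I would then extend to arbitrary $N,l$ by exploiting that $(a(n))$ is $k$-automatic (its generating rule is morphic on $k$-ary digits), so arithmetic subsequences are also $k$-automatic and the same digit-lifting argument applies to their $k$-kernel. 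For the stammering input, I exploit the block self-similarity of $(a(n))$: for $0\le n<k^y$ and $1\le s\le k-1$ one has $a(n+sk^y)\equiv a(n)+\kappa(s,y)\pmod L$, so long initial blocks of the sequence reappear (up to an additive constant mod $L$) at geometrically spaced positions, producing prefix repetitions of the required length. The main technical obstacle, where I expect most of the real work, is tracking carries when passing from $(a(n))$ to $(a(N+nl))$: I must verify simultaneously that the digit-lifting argument still forces $(1.5)$ for the subsequence, and that the stammering blocks survive with the length ratio $> 1$ required by the quantitative hypothesis of the transcendence criterion.
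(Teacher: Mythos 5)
Your overall architecture (reduce to the ABL--Bugeaud stammering criterion; prove non-periodicity of the arithmetic subsequence plus explicit prefix repetitions) is the same as the paper's, and your treatment of the ``unless'' direction is fine. But there is a genuine gap in the transcendence direction, precisely at the step you yourself flag as open: passing from the full sequence to $(a(N+nl))_{n\ge 0}$. Your bridge is the claim that $(a(n))_{n\ge 0}$ is $k$-automatic, so that its arithmetic subsequences are again $k$-automatic and the digit-lifting argument can be rerun on their $k$-kernels. This is false for a general $\kappa$: the map $\kappa(\cdot,y)$ is an arbitrary function of $y$, and the generalized Thue--Morse sequence is $k$-automatic \emph{only} when $\kappa$ is eventually periodic in $y$ (this is exactly Proposition $5.1$ of the paper; if, say, $\kappa(1,y)$ is not eventually periodic in $y$, the $k$-kernel is infinite). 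So for the generic $\kappa$ covered by Theorem $1.2$ your extension to arbitrary $N,l$ has no support, and even in the automatic case the phrase ``the same digit-lifting argument applies to their $k$-kernel'' is not an argument, since the subsequence $(a(N+nl))$ is not itself of generalized Thue--Morse type and carries genuinely interfere.

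The paper closes this gap with an arithmetic device you do not have: Lemma $3.1$ produces multiples $xl$, $Xl$ of $l$ whose $k$-adic expansions begin with the digit $1$ at a common position followed by an arbitrarily long gap of zeros. Combining this with the multiplicativity $a(n+m)=a(n)a(m)$ for digit-disjoint $n,m$ (after encoding the values as $L$-th roots of unity), constancy of some progression $(a(N+tl))_t$ forces $a(k^r tl)=1$ for all $t$ and large $r$, and then the special multiples from Lemma $3.1$ yield $a(k^r(s+1))=a(k^r)a(k^rs)$, hence $t_{s,A+y}=h^{sk^y}$, i.e.\ exactly condition $(1.5)$ (Proposition $3.1$ and Theorem $3.1$). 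This is the step your $N=0$, $l=1$ digit-lifting does not reach, because the whole difficulty is realizing prescribed digit patterns \emph{inside multiples of $l$}. Likewise, for the stammering input the paper avoids all carry-tracking: writing the prefix as $A_m f^{i_1}(A_m)\cdots f^{i_{Ll}}(A_m)$ and sampling along $N+nl$, the blocks indexed by multiples of $l$ start at positions divisible by $l$, so the sampling offset is the same in each of them, and the pigeonhole among the $L+1$ exponents $i_0=0,i_l,\dots,i_{Ll}\in\{0,\dots,L-1\}$ gives a genuinely repeated factor $W_{2,m}$ of length $\asymp k^m/l$ inside a prefix of length $\asymp (Ll+1)k^m/l$, which is what feeds the criterion with $w=1+\frac{1}{2Ll+3}$. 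Your ``repetition up to an additive constant mod $L$'' is not yet a repetition, and your plan to fix both this and the non-periodicity by tracking carries is exactly the part that remains unproved; without a substitute for Lemma $3.1$ and the aligned-block pigeonhole, the proposal does not establish Theorem $1.2$.
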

The proof of Theorem $1.2$ rests on the results in section $3$ (especially Lemma $3.1$) and the transcendence criterion of [ABL].

 \; \;This paper is organized as follows: In section \ref{sec:basic}, we review the basic concepts about the 
periodicity of sequences, and give the formally definition of the generalized Thue-Morse sequences.
For a sequence $(a(n))_{n=0}^\infty$, we set its generating function $g(z) \in \mathbb{C}[[z]]$ by 
\[
      g(z):=\sum_{n=0}^\infty a(n) z^n. 
\]
For a generalized Thue-Morse sequence, one can prove that the generating function is convergent, 
and that it has an infinite product expansion. In section \ref{sec:main result}, first we prove
the key lemma on the $k$-adic expansion of natural numbers. Then we will use this lemma and 
the infinite product expansion of the generating function of 
a generalized Thue-Morse sequence to prove a necessary-sufficient condition for the non-periodicity of
the sequence. Moreover we prove that if the sequence is not periodic,  
then all equally spaced subsequences $(a(N+nl))_{n=0}^\infty$ (where $N \ge 0$ and  $l >0$) of the sequence are not periodic.  In section $4$, first we  introduce the concept of stammering sequence ([AB1], [Bu$1$])
and the combinatorial transcendence criterion of [ABL], [Bu$1$]. Then applying combinatorial transcendence criterion of [ABL], [Bu$1$] to the generalized non-periodic Thue-Morse sequence$(a(n))_{n=0}^\infty$ which take the values on $\{0,1,,\cdots,\beta-1\}$ where $\beta$ is an integer greater than $1$ (resp. which take the values on bounded positive integers), we show that $\sum_{n=0}^\infty a(N+nl){\beta}^{-n-1}$ are transcendental numbers. (resp. the continued fraction $[0:a(N), a(N+l)\cdots,a(N+nl ) \cdots]$ are transcendental numbers.) These results include Theorem $1.2$. In section $5$, first we consider the necessary-sufficient condition that a generalized Thue-Morse sequence of  is $k$-automatic sequence.  Then we can find many transcendental numbers whose irrationality exponent is finite in all equally spaced subsequences of the corresponding generalized Thue-Morse sequence
by result of [AC]. Moreover we consider transcendency of the value at algebraic point of the generating function $\sum_{n=0}^\infty a(N+nl)z^{-n-1}$ by [Bec].
\begin{center}\subsection*{Acknowledgments}\end{center}
I would like to thank Hajime Kaneko for useful advice and discussions.
I  would also like to thank Professor Jeffrey Shallit  for useful suggestion about references.
I would also like to thank Professor Kimio Ueno for providing considerable
support, and the members of Prof. Uenofs laboratory for their helpful and useful comments.
\newline
\newline
\newline
\section{Definition of the generalized Thue-Morse sequences and their generating functions}\label{sec:basic}

 Let $(a(n))_{n=0}^\infty$ be a sequence with values in $\mathbb{C}$. We say that $(a(n))_{n=0}^\infty$ 
is ultimately periodic  if there  exist non negative integers $N$ and $l(0<l )$ such that 
\begin{align}\label{def:period}
    a(n)=a(n+l)  \qquad (\forall n \ge N). 
\end{align}
An equally spaced subsequence of $(a(n))_{n=0}^\infty$ 
is defined to be a subsequence such as $(a(N+tl))_{t=0}^\infty$, where $N \ge 0$ and  $l >0$. 
\begin{Def}
Let $(a(n))_{n=0}^\infty$ be a sequence with values in $\mathbb{C}$. The sequence $(a(n))_{n=0}^\infty$ 
is called  almost everywhere non periodic if
all equally spaced  subsequences of $(a(n))_{n=0}^\infty$ do not take on only one value.

\end{Def}
 \;  Now we show some lemmas about the almost everywhere non periodic sequences.
\begin{Lem}
Let $(a(n))_{n=0}^\infty$ be almost everywhere non periodic. Then $(a(n))_{n=0}^\infty$ is not ultimately periodic.
\end{Lem}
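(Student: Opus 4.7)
The plan is to argue by contrapositive: assume $(a(n))_{n=0}^\infty$ is ultimately periodic and produce an equally spaced subsequence that takes on only one value, thereby contradicting the definition of almost everywhere non periodic.

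Concretely, suppose there exist $N \ge 0$ and $l > 0$ with $a(n) = a(n+l)$ for all $n \ge N$. Iterating this identity $t$ times gives $a(N) = a(N+l) = a(N+2l) = \cdots = a(N+tl)$ for every $t \ge 0$. Hence the equally spaced subsequence $(a(N+tl))_{t=0}^\infty$ is constant, i.e.\ takes on only one value. By Definition 2.1 this contradicts almost everywhere non periodicity, so $(a(n))_{n=0}^\infty$ cannot be ultimately periodic.

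There is essentially no obstacle here; the statement is a direct unwinding of the two definitions. The only thing to be careful about is choosing the right starting index for the equally spaced subsequence (namely $N$, the threshold from which periodicity holds) so that the identity $a(n) = a(n+l)$ may legitimately be applied at every required step.
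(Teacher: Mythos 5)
Your proposal is correct and follows essentially the same route as the paper: both argue by contraposition, taking the periodicity data $a(n)=a(n+l)$ for $n\ge N$ and observing that the equally spaced subsequence $(a(N+tl))_{t=0}^\infty$ is then constant, contradicting Definition 2.1. Your write-up is in fact slightly more careful than the paper's, since you make the iteration $a(N)=a(N+l)=\cdots=a(N+tl)$ and the choice of starting index $N$ explicit.
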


\begin{proof}
We  show in contraposition. Assume that $(a(n))_{n=0}^\infty$ is ultimately periodic. 
From the definition of almost everywhere non periodic, we see that there exist non negative integers $N$ and $l(0<l)$ such that 
\begin{align}\label{}
            a(n)=a(n+l),   \qquad (\forall n \ge N). 
\end{align}
From this, it follows that the equally spaced subsequence $(a(N+tl))_{t=0}^\infty$ takes on only one value. This completes the proof.
\hfill
$\square$
\end{proof}

\begin{Lem}
Let $(a(n))_{n=0}^\infty$ be almost everywhere non periodic. Then all equally spaced subsequences of this sequence are almost everywhere non periodic.
\end{Lem}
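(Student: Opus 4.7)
The plan is to unfold the definitions and observe that an equally spaced subsequence of an equally spaced subsequence is itself an equally spaced subsequence of the original. This reduces the claim to a direct application of the hypothesis.

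First I would fix an arbitrary equally spaced subsequence $(b(t))_{t=0}^\infty := (a(N+tl))_{t=0}^\infty$ of $(a(n))_{n=0}^\infty$, where $N \ge 0$ and $l > 0$, and aim to show that $(b(t))_{t=0}^\infty$ is almost everywhere non periodic. By the definition in the preceding paragraph, this requires showing that every equally spaced subsequence of $(b(t))_{t=0}^\infty$ takes more than one value.

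Next I would take an arbitrary equally spaced subsequence $(b(M+tL))_{t=0}^\infty$ of $(b(t))_{t=0}^\infty$, where $M \ge 0$ and $L > 0$, and rewrite
\[
b(M+tL) \;=\; a\bigl(N + (M+tL)l\bigr) \;=\; a\bigl((N+Ml) + t(lL)\bigr).
\]
Setting $N' := N+Ml \ge 0$ and $l' := lL > 0$, this exhibits $(b(M+tL))_{t=0}^\infty$ as the equally spaced subsequence $(a(N'+t l'))_{t=0}^\infty$ of the original sequence $(a(n))_{n=0}^\infty$.

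Finally, since $(a(n))_{n=0}^\infty$ is assumed to be almost everywhere non periodic, this subsequence $(a(N'+t l'))_{t=0}^\infty$ does not take on only one value. As $M$ and $L$ were arbitrary, every equally spaced subsequence of $(b(t))_{t=0}^\infty$ takes more than one value, proving $(b(t))_{t=0}^\infty$ is almost everywhere non periodic. There is no real obstacle here; the proof is essentially a bookkeeping check that the composition of two arithmetic progressions $n \mapsto N+nl$ and $t \mapsto M+tL$ is again an arithmetic progression with positive common difference $lL$.
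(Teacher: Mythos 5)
Your proof is correct and follows essentially the same idea as the paper: the key point in both is that an equally spaced subsequence of an equally spaced subsequence (composition of $n \mapsto N+nl$ with $t \mapsto M+tL$) is again an equally spaced subsequence of the original sequence. The paper phrases this in contrapositive form while you argue directly, but the underlying computation is identical.
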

\begin{proof}
We show in contraposition. If $(a(N+tl))_{t=0}^\infty$ is not almost everywhere non periodic,
then there exist non negative integers $k$ and $J(0<J)$ such that
$(a(N+kl+mJl))_{m=0}^\infty$ takes on only one value.
$(a(N+kl+mJl))_{m=0}^\infty$ is an equally spaced subsequence of $(a(n))_{n=0}^\infty$, too.
Then $(a(n))_{n=0}^\infty$ is not almost everywhere non periodic.
This completes the proof.
\hfill
$\square$
\end{proof}
\begin{Cor}
$(a(n))_{n=0}^\infty$ is almost everywhere non periodic 
if and only if all equally spaced subsequences of this sequence are not 
ultimately periodic.
\end{Cor}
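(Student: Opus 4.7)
The plan is to derive the corollary directly from Lemmas 2.1 and 2.2, using the trivial fact that a constant sequence is (ultimately) periodic.

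For the forward implication, I would assume $(a(n))_{n=0}^\infty$ is almost everywhere non periodic and pick an arbitrary equally spaced subsequence $(a(N+tl))_{t=0}^\infty$. By Lemma 2.2 this subsequence is itself almost everywhere non periodic, and then Lemma 2.1 applied to it gives that it is not ultimately periodic.

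For the reverse implication I would argue by contraposition: suppose $(a(n))_{n=0}^\infty$ is not almost everywhere non periodic. Then by the definition, some equally spaced subsequence $(a(N+tl))_{t=0}^\infty$ takes only one value, so it is constant and in particular ultimately periodic, contradicting the hypothesis. Hence $(a(n))_{n=0}^\infty$ must be almost everywhere non periodic.

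There is no real obstacle here; the content is entirely packaged into the two preceding lemmas, and the corollary is just their combination with the observation that a constant sequence is a (trivially ultimately periodic) example of period 1. The only point to be careful about is to note explicitly that when we invoke Lemma 2.2, we must apply it with its conclusion to each equally spaced subsequence separately, not just to $(a(n))_{n=0}^\infty$ itself.
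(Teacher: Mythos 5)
Your proposal is correct and follows essentially the same route as the paper: the forward direction combines Lemma 2.2 (each equally spaced subsequence is again almost everywhere non periodic) with Lemma 2.1 (hence not ultimately periodic), and the converse is the same contrapositive observation that a subsequence taking only one value is trivially ultimately periodic. Your explicit remark about applying Lemma 2.2 to each subsequence separately only makes the paper's compressed "By Lemma 2.1, 2.2" step more transparent.
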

\begin{proof}
We assume that $(a(n))_{n=0}^\infty$ is almost everywhere non periodic.
By Lemma $2.1,\; 2.2 $, we see that all equally spaced subsequences of this sequence are not 
periodic.

 We show the converse of collorally in contraposition. Assume $(a(n))_{n=0}^\infty$ is not almost everywhere non periodic.
Then there exist non negative integers $N$ and $l(0<l)$ such that
$(a(N+tl))_{t=0}^\infty$ takes on only one value. This sequence is ultimately periodic.
This completes the proof.
\hfill
$\square$
\end{proof}
Now we give the formally definition of the generalized Thue-Morse sequences.
\begin{Def}
Let $L$ be an integer  greater than $1$ and $a_0,a_1, \cdots ,a_{L-1}$ different $L$ complex numbers. 
Define a morphism $f$ from $\{a_0,a_1 \cdots a_{L-1}\}$ to $\{a_0,a_1 \cdots a_{L-1}\}$ as follows,
\begin{align}\label{}
f(a_i) = a_{i+1},  
\end{align}
where the indice $i$ is defined$\mod{L}$.
Let $f^j$ be the $j$ times composed mapping of $f$ and $f^0$ an identity mapping. 
Let $A$ and $B$ be finite words on $\{a_0,a_1, \cdots ,a_{L-1}\}$,
and we denote by $AB$ concatenation of $A$ and $B$.

 Let $A_0=a_0$.
Let $k$ be an integer greater than $1$, $\mathbb{N}$ the set of non negative integers. Let $\kappa$ be 
a map $\kappa$:$\{1, \cdots  ,p-1\} \times\mathbb{N} \rightarrow\ \{0, \cdots ,L-1\}$. We define $W_m$,
a space of words, by
\begin{align}
W_{m}:=\{ a_{i_1} a_{i_2} \cdots a_{i_m} \; | a_{i_1}, a_{i_2}, \cdots ,a_{i_m}\in \{a_0,a_1 \cdots a_{L-1}\} \}.
\end{align}
Define $A_{n+1}\in W_{k^{n+1}}$ recursively by
\begin{align}
     A_{n+1}:=A_n f^{\kappa(1,n)}(A_n)\cdots \cdots f^{\kappa (k-1,n)}(A_n).
\end{align}
and denote the limit of $A_n$ by    
\begin{align}
 A_{\infty}:=\lim_{n \to \infty} A_n. 
\end{align}
We call $A_{\infty}$  the generalized Thue-Morse sequence of type $(L,k,\kappa)$. We call it $(L,k,\kappa)$-TM sequence for short.
\end{Def}
\begin{Exa}  
Let $L=2$, $a_0=0$, $a_1=1$, $\kappa(1,n)=1$ \; for all $n$. Then the $(2,2,1)$-TM sequence is as follows,
\begin{align*}
         &\qquad \qquad A_0=0,\; A_1=01,\;  A_2=0110,\;   A_3=01101001,\\
         & A_{\infty}=0110100110010110100101100110100110010110011010010110100110\cdots.
\end{align*}
This is the Thue-Morse sequence.
\end{Exa} 

Let $(a(n))_{n=0}^\infty$ be a sequence with values in $\mathbb{C}$. The generating function of 
$(a(n))_{n=0}^\infty$ is a formal 
power series $g(z) \in \mathbb{C}[[z]]$ defined by 
\[
g(z):=\sum_{n=0}^\infty a(n) z^n. 
\]
The following lemma clarifies the meaning of $(L,k,\kappa)$-TM sequence and will be used in the next section.
\begin{Lem} 
Let  $A_{\infty}=((b(n))_{n=0}^\infty$ be a $(L,k,\kappa)$-TM sequene with $a_j=\exp \frac{2\pi \sqrt{-1} j}{L}$ $($where $0\le \; j \le L-1$ $)$.
Let $ G_{A_{\infty}}(z)$ be the generating function of $((b(n))_{n=0}^\infty$,
\[
 G_{A_{\infty}}(z):=\sum_{n=0}^\infty a(n) z^n. 
\]
Then $ G_{A_{\infty}}(z)$ has the infinite product on $|z|<1$ as follows,
\begin{align}
      G_{A_{\infty}}(z)=\prod_{y=0}^\infty (1+\sum_{s=1}^{k-1}\exp \frac{2\pi \sqrt{-1} \kappa(s,y) } {L}z^{s k^y}).   
\end{align}
\end{Lem}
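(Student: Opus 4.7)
The plan is to establish the product formula by induction on the level $n$ of the construction and then pass to the limit. The key algebraic point is that, under the identification $a_j=\omega^j$ with $\omega=\exp(2\pi\sqrt{-1}/L)$, the morphism $f$ acts on letters as multiplication by $\omega$, i.e.\ $f(a_j)=a_{j+1}=\omega\cdot a_j$ and hence $f^{m}(a_j)=\omega^{m}a_j$ for every $m\in\mathbb{N}$. Consequently, if $W=a_{i_1}a_{i_2}\cdots a_{i_r}$ is any finite word and we let $G_W(z):=\sum_{t=1}^{r}a_{i_t}z^{t-1}$ be its generating polynomial, then applying $f^{m}$ letter-by-letter simply rescales the generating polynomial: $G_{f^{m}(W)}(z)=\omega^{m}G_W(z)$.

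First I would make this observation precise and note that by the recursive definition $A_{n+1}=A_n\,f^{\kappa(1,n)}(A_n)\cdots f^{\kappa(k-1,n)}(A_n)$, the word $A_{n+1}$ is a concatenation of $k$ blocks of length $k^{n}$ each. Concatenation of words translates into addition of shifted generating polynomials, so
\begin{align*}
G_{A_{n+1}}(z)
&=\sum_{s=0}^{k-1} z^{s k^{n}}\,G_{f^{\kappa(s,n)}(A_n)}(z)
=G_{A_n}(z)\Bigl(1+\sum_{s=1}^{k-1}\omega^{\kappa(s,n)}z^{s k^{n}}\Bigr),
\end{align*}
where we set $\kappa(0,n):=0$. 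Starting from $A_0=a_0=1$ so that $G_{A_0}(z)=1$, an induction on $n$ then gives
\begin{align*}
G_{A_n}(z)=\prod_{y=0}^{n-1}\Bigl(1+\sum_{s=1}^{k-1}\omega^{\kappa(s,y)}z^{s k^{y}}\Bigr).
\end{align*}

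Next I would check that this finite product converges to $G_{A_\infty}(z)$ on $|z|<1$. Because $A_{n+1}$ begins with $A_n$, the limit $A_\infty$ is well defined as a one-sided infinite word and the first $k^{n}$ coefficients of $G_{A_\infty}(z)$ coincide with those of $G_{A_n}(z)$. Hence $G_{A_n}(z)\to G_{A_\infty}(z)$ as formal power series, and uniformly on every disk $|z|\le r<1$ since $|a(n)|=1$ makes $G_{A_\infty}$ holomorphic on $|z|<1$. To identify the limit with the infinite product, it suffices to note the standard estimate
\begin{align*}
\Bigl|1+\sum_{s=1}^{k-1}\omega^{\kappa(s,y)}z^{s k^{y}}-1\Bigr|\le (k-1)|z|^{k^{y}},
\end{align*}
whose right-hand side is summable in $y$ on any compact subset of $|z|<1$; thus the infinite product converges uniformly there and equals $G_{A_\infty}(z)$.

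The only real obstacle is bookkeeping: confirming that $f^{m}$ acts as scalar multiplication by $\omega^{m}$ on $\mathbb{C}$-valued letters, and that concatenation really corresponds to the shift-and-add formula above, so that the inductive step goes through with the correct exponent $sk^{n}$. Once this is in place, both the product identity at finite level and the convergence argument are routine, yielding the displayed formula on $|z|<1$.
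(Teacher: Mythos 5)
Your proposal is correct and follows essentially the same route as the paper: the morphism $f$ acts as multiplication by $\exp(2\pi\sqrt{-1}/L)$, an induction on $n$ via the concatenation structure of $A_{n+1}$ yields $G_{A_n}(z)=\prod_{y=0}^{n-1}\bigl(1+\sum_{s=1}^{k-1}\exp\frac{2\pi\sqrt{-1}\kappa(s,y)}{L}z^{sk^y}\bigr)$, and the limit is identified using the fact that $A_n$ is a prefix of $A_\infty$ together with convergence on $|z|<1$. Your explicit uniform-convergence estimate for the infinite product is just a slightly more detailed version of the paper's appeal to bounded coefficients and coefficient comparison, so no substantive difference remains.
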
@
\begin{proof}
From the assumption that $a_j=\exp \frac{2\pi \sqrt{-1} j}{L}$, we have
\begin{align} 
f(a_j)=\exp \frac{2\pi \sqrt{-1} }{L} a_j.
\end{align}  
By the fact $(L,k,\kappa)$-TM sequence takes only finite values and the Cauchy-Hadamard theorem, 
we see that $G_{A_{\infty}}(z)$ and $\prod_{y=0}^\infty(1+\sum_{s=1}^{k-1}\exp \frac{2\pi \sqrt{-1} \kappa(s,y) } {L}z^{s k^y})$ converge absolutely on the unit disk.
Let $G_{A_n}(z)$ be the generating function of $A_n$  $($We identifies $A_n 0\cdots0\cdots $ with $A_n$ $)$.

 First, we will show $(2.9)$ by induction on $n$,
\begin{align}
         G_{A_n}(z)=\prod_{y=0}^{n-1}(1+\sum_{s=1}^{k-1}\exp \frac{2\pi \sqrt{-1} \kappa(s,y)}{L}z^{s k^y}).  
\end{align} 
First we check the case n=1. By definition of $A_1$, we have
\begin{align}
G_{A_1}(z)=1+\sum_{s=1}^{k-1}\exp \frac{2\pi \sqrt{-1} \kappa(s,0)}{L}z^s.
\end{align}
Thus the case n=1 is true. By the induction hypothesis we may assume that,
\begin{align}
             G_{A_j}(z)=\prod_{y=0}^{j-1}(1+\sum_{s=1}^{k-1}\exp \frac{2\pi \sqrt{-1} \kappa(s,y)}{L}z^{s k^y}).
\end{align}
Then we have,
\begin{align}
            G_{A_{j+1}}(z)=g_{A_j}(z)+\sum_{s=1}^{k-1} g_{f^{\kappa(s,j)}(A_j)}(z)z^{s k^j}.
\end{align}
On the other hands we have,
\begin{align}
G_{f^{\kappa(s,j)}(A_j)}(z)=\exp \frac{2\pi \sqrt{-1} \kappa(s,j)}{L}G_{A_j}(z).
\end{align}
From $(2.11)$-$(2.13)$, we get
\begin{align}
          G_{A_{j+1}}(z)=G_{A_j}(z)(1+\sum_{s=1}^{k-1}\exp \frac{2\pi \sqrt{-1} \kappa(s,y)}{L}z^{s k^j})=\prod_{y=0}^j(1+\sum_{s=1}^{k-1}\exp \frac{2\pi \sqrt{-1} \kappa(s,y)}{L}z^{s k^y}).
\end{align}
This completes the proof of $(2.9)$.  
Next we  will compare the coefficients of both sides $z^j$ of $(2.7)$. The coefficient of $z^j$ in righthand side of $(2.7)$ is 
determined by $G_{A_N}(z)$ for large enough $N$.
By definition of $A_{\infty}$, we see that the prefix $p^N$ words of $A_{\infty}$ is $A_N$.
By the fact mentioned above and $(2.9)$, the coefficients of both sides $z^j$ of $(2.7)$ coincide. This completes the proof.
\hfill
$\square$
\end{proof}
\begin{Prop}
Let $A_{\infty}=((b(n))_{n=0}^\infty$ be a $(L,k,\kappa)$-TM sequene with $a_j=\exp \frac{2\pi \sqrt{-1} j}{L}$ $($where $0\le \; j \le L-1$ $)$. Let $(a(n))_{n=0}^\infty$ be a sequence defined by $(1.4)$.
Then 
\begin{align}
\frac{L}{2\pi \sqrt{-1}} \log b(n)\equiv a(n) \; \pmod{L}.
\end{align}
\end{Prop}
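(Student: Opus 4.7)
The plan is to read off the coefficient of $z^n$ from the infinite product expansion supplied by Lemma~2.3, and to identify it with the exponential $\exp\!\bigl(\tfrac{2\pi\sqrt{-1}}{L} a(n)\bigr)$ via the uniqueness of the $k$-adic expansion of $n$.

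First I would expand the infinite product
\[
G_{A_\infty}(z)=\prod_{y=0}^\infty\Bigl(1+\sum_{s=1}^{k-1}\exp\tfrac{2\pi\sqrt{-1}\,\kappa(s,y)}{L}\,z^{sk^y}\Bigr)
\]
formally by choosing, for each index $y$, one summand from the $y$-th factor. Declaring that choosing the $1$ corresponds to $s=0$ (with convention $\kappa(0,y)=0$), each term in the expansion is indexed by a sequence $(s_y)_{y\ge 0}$ with $s_y\in\{0,1,\dots,k-1\}$ and only finitely many nonzero entries, and contributes
\[
\Bigl(\prod_{y:\,s_y\ne 0}\exp\tfrac{2\pi\sqrt{-1}\,\kappa(s_y,y)}{L}\Bigr)\,z^{\sum_{y}s_y k^y}.
\]
Convergence on $|z|<1$ was already established in Lemma~2.3, so term-by-term grouping by the exponent is legitimate.

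Next I would appeal to the fact that every non negative integer $n$ has a unique $k$-adic expansion $n=\sum_{y\ge 0}s_y k^y$ with $s_y\in\{0,1,\dots,k-1\}$. Hence exactly one term of the formal expansion contributes to the coefficient of $z^n$, and that coefficient equals
\[
b(n)=\prod_{y:\,s_y\ne 0}\exp\tfrac{2\pi\sqrt{-1}\,\kappa(s_y,y)}{L}
=\exp\!\Bigl(\tfrac{2\pi\sqrt{-1}}{L}\sum_{y:\,s_y\ne 0}\kappa(s_y,y)\Bigr).
\]
Now I would translate the indicator $d(n;sk^y)$ appearing in~(1.4) into this language: $d(n;sk^y)=1$ iff $s_y=s$, so for each $y$ the inner sum $\sum_{s=1}^{k-1}\kappa(s,y)d(n;sk^y)$ equals $\kappa(s_y,y)$ when $s_y\ne 0$ and $0$ otherwise. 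Consequently
\[
\sum_{y:\,s_y\ne 0}\kappa(s_y,y)=\sum_{y=0}^\infty\sum_{s=1}^{k-1}\kappa(s,y)d(n;sk^y)\equiv a(n)\pmod{L}.
\]

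Finally, taking logarithms (valued in $\tfrac{2\pi\sqrt{-1}}{L}\mathbb{Z}$ modulo $2\pi\sqrt{-1}\mathbb{Z}$) yields
\[
\tfrac{L}{2\pi\sqrt{-1}}\log b(n)\equiv a(n)\pmod{L}.
\]
The only delicate point is the bookkeeping between the formal product expansion and the $k$-adic digits; once the empty-product/$s=0$ convention is fixed, uniqueness of $k$-adic expansions does all the work, so this should be routine rather than a genuine obstacle.
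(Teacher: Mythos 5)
Your proposal is correct and follows essentially the same route as the paper: both identify the coefficient of $z^n$ in the infinite product of Lemma~2.3 via the uniqueness of the $k$-adic expansion, obtaining $b(n)=\exp\bigl(\tfrac{2\pi\sqrt{-1}}{L}\sum_q \kappa(s_{n,q},w_n(q))\bigr)$, and then match this exponent with the definition $(1.4)$ of $a(n)$ before taking logarithms. Your write-up merely makes explicit the bookkeeping (the $s=0$ convention and grouping of terms by exponent) that the paper leaves implicit.
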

\begin{proof}
Let the $k$-adic expansion of $n$ be as follows,  
\begin{align}
n=\sum_{q=1}^{n(k)} s_{n,q} k^{w_n(q)},
\end{align}
 where  $1\le s_{n,q}\le k-1$, $0 \le w_n(q)<w_n(q+1)$.
By the fact uniqueness of the $k$-adic expansion and Lemma $2.3$, we have
 \begin{eqnarray}
&b(n)=\prod_{q=1}^{n(k)} \exp \frac{2\pi \sqrt{-1} \kappa (s_{n,q},w_n(q))}{L}=\exp \frac{2\pi \sqrt{-1} (\sum_{q=1}^{n(k)}\kappa (s_{n,q},w_n(q)))}{L}\nonumber \\%
&=\exp \frac{2\pi \sqrt{-1} (\sum_{q=1}^{n(k)}\kappa (s_{n,q},w_n(q))\pmod{L})}{L}.
\end{eqnarray}
By $(2.16)$, $(2.17)$ and definition of  $a(n)$, we get
\begin{align}
\frac{L}{2\pi \sqrt{-1}} \log b(n)\equiv a(n) \; \pmod{L}.
\end{align}
This completes the proof.
\end{proof}
\begin{Def}Let $(a(n))_{n=0}^\infty$ be a sequence with values in $\mathbb{C}$.
Let $g(z)$ be the $generating$ $function$ of $(a(n))_{n=0}^\infty$.
We say $(a(n))_{n=0}^\infty$ is a $k$-adic expansion sequence 
if the $g(z)$ has the following infinite product expansion for an integer $k$ greater than $1$
and $t_{s,j} \neq0$ for all $1\le  s\le k-1$ and for all $j\in\mathbb{N}$,
\begin{align}
g(z)=\prod_{y=0}^\infty (1+\sum_{s=1}^{k-1} t_{s,y} z^{s k^y}).
\end{align}
\end{Def}

\begin{center}\section{The necessary-sufficient condition for the non-periodicity of
a generalized Thue-Morse sequence}
\label{sec:main result}\end{center}
\;First, we show the following key lemma about the $k$-adic expansion of natural numbers.

\begin{Lem} 
Let $k>1$ and $l>0$ be integers.
Let $t$ be a non negative integer. Then there exists an integer $x$ such that,
\begin{align}
xl=\sum_{q=1}^{finite}  s_{xl,q} k^{w_{xl}(q)},  
\end{align}
where $s_{{xl},1}=1, w_{xl}(2)-w_{xl}(1)>t,w_{xl}(q+1)>w_{xl}(q)\ge 0$.
\newline
Moreover let $t^{'}$ be other non negative integer. Then there exists an integer $X$ such that,
\begin{align}
Xl=\sum_{q=1}^{finite}  s_{Xl,q} k^{w_Xl(q)},
\end{align}
where $s_{{Xl},1}=1, w_{Xl}(2)-w_{Xl}(1)>t^{'}, w_{Xl}(q+1)>w_{Xl}(q)\ge 0, w_{xl}(1)=w_{Xl}(1)$.
\end{Lem}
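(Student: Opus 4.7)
The plan is to reduce the statement to the solvability of a single linear congruence modulo a power of $k$. First I would write $l = k^{a} m$ with $a \ge 0$ and $\gcd(m, k) = 1$. For any positive integer $x$ not divisible by $k$, the number $xl$ has its first nonzero base-$k$ digit at position $a$, and that digit equals $xm \bmod k$. So to force both $s_{xl,1} = 1$ and a gap larger than $t$ before the next nonzero digit, it suffices to find $x$ with
\[
   xm \equiv 1 \pmod{k^{t+1}}.
\]

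Since $\gcd(m, k^{t+1}) = 1$, the congruence has a positive solution $x$; by adding $k^{t+1}$ to $x$ if necessary I can also arrange $xm > 1$. Then $xm = 1 + k^{t+1} r$ for some integer $r \ge 1$, so
\[
   xl = k^{a}(xm) = k^{a} + k^{a+t+1} r.
\]
Writing $r$ in base $k$ shows that the base-$k$ expansion of $xl$ consists of a single $1$ at position $a$ together with the digits of $r$ shifted up to positions $\ge a+t+1$. This immediately yields $w_{xl}(1) = a$, $s_{xl,1} = 1$, and $w_{xl}(2) - w_{xl}(1) \ge t+1 > t$, which are exactly the required conditions.

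For the second assertion I would apply the same construction with $t'$ replacing $t$ to obtain $X$. Since both constructions force the first nonzero base-$k$ digit of the multiple to sit at position $a$, the identity $w_{xl}(1) = w_{Xl}(1) = a$ comes out automatically. The only substantive point is splitting off the $k$-part of $l$ so that $m$ becomes a unit modulo a high power of $k$; once that observation is made, everything else is direct bookkeeping with base-$k$ digits and is not really an obstacle.
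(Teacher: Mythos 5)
Your argument has a genuine gap at the very first step: the factorization $l = k^{a} m$ with $a \ge 0$ and $\gcd(m,k)=1$ does not exist for general composite $k$. For example, with $k=4$ and $l=2$ the only option is $a=0$, $m=2$, and $\gcd(2,4)\ne 1$; similarly $k=6$, $l=4$ fails. The decomposition you want only works when the part of $l$ built from primes dividing $k$ happens to be an exact power of $k$ (in particular it always works when $k$ is prime, but the lemma is stated for arbitrary $k>1$). Since everything downstream---the congruence $xm\equiv 1 \pmod{k^{t+1}}$, the identity $xl=k^{a}+k^{a+t+1}r$, and the observation that the lowest digit position $a$ is independent of $t$ (which gives $w_{xl}(1)=w_{Xl}(1)$)---rests on this factorization, the proof as written does not go through.

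The repair is exactly the extra bookkeeping the paper performs. Write $l=l_1 l_2$, where every prime factor of $l_1$ divides $k$ and $\gcd(l_2,k)=1$, and choose $a$ (depending only on $l$ and $k$, not on $t$) large enough that $l_1 \mid k^{a}$; in the paper this is the role of the exponent $F+1$ defined from the prime factorizations of $k$ and $l$. Then, instead of inverting $m$ modulo $k^{t+1}$, you solve $1+k^{t+1}r\equiv 0 \pmod{l_2}$ (possible since $k$ is a unit modulo $l_2$; the paper does this via $DG=1-k^{t+1}E$ and the square $D^2G^2=1+k^{t+1}E(k^{t+1}E-2)$ to keep the remainder non-negative, with a small separate case when it vanishes), and take $x=\bigl(k^{a}/l_1\bigr)\bigl((1+k^{t+1}r)/l_2\bigr)$, so that $xl=k^{a}(1+k^{t+1}r)$. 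From there your digit-reading conclusion and the $t$-independence of the position $a$ are correct and give both claims, so the remainder of your argument is sound and is essentially the paper's strategy; only the splitting-off of the $k$-part needs to be done with a power of $k$ that absorbs $l_1$, rather than by the nonexistent factorization $l=k^a m$.
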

\begin{proof}  
Assume the factorization into prime factors of $k$ as follows,
\begin{align}
k=\prod_{t=1}^k {p_t}^{y_t},
\end{align}
where $p_t \;(1\le t \le k\;)$ are $k$ prime numbers and $y_t\;(1\le t \le k\;)$ are $k$ positive integers.
Let $l$ be represented as follows,
\begin{align}
l=G\prod_{u=1}^n {p_{t_u}}^{x_u},
\end{align}
where $G$ and $k$ are coprime, ${p_{t_u}} \in \{p_t|1\le t \le k \}$ and $x_u$ are $n$ positive integers.
By the fact $G$ and $k$ are coprime, then there exist integers $D$ and $E$ such that
\begin{align}
DG=1-k^{t+1}E.
\end{align}
Let
$$F:= max  \{A  |  x_u=y_{t_u} A  +H,  \;  0 \leq H  < y_{t_u},   \; 1 \leq u  \leq n \}.$$
From the definition of $F$, we see that $k^{F+1}  \prod_{u=1}^n {p_{t_u}}^{-x_u}$ is a natural number. Then we have
\begin{align}
lD^2G k^{F+1} \prod_{u=1}^n {p_{t_u}}^{-x_u}=k^{F+1} D^2G^2.
\end{align}
On the other hands, by $(3.5)$, we have
\begin{align}
D^2G^2=1+k^{t+1}E(k^{t+1}E-2).
\end{align}
Then we see that $E(k^{t+1}E-2)$ is a natural number.
Since the $k$-adic expansion of $E(k^{t+1}E-2)$, if $E(k^{t+1}E-2)>0$, then $k^{F+1}D^2G^2$ satisfies lemma .
If $E(k^{t+1}E-2)=0$, then $G=1$. $k^{F+1}(1+k^{t+1})$ satisfies lemma.

Since $F+1$ independent of $t$, then the second claim is trivial. This completes the proof.
\hfill
$\square$
\end{proof}
 We will show almost everywhere nonperiodic result for $k$-$adic$ expansion sequences by previous lemma.  
\begin{Prop}
Let $A_{\infty}=(a(n))_{n=0}^\infty$ be a $k$-$adic$ expansion sequence and $G_{A_{\infty}}(z)$ 
the generating function of this sequence. 
If there exists an equally spaced subsequence of $(a(n))_{n=0}^\infty$ which is periodic,
then there exist a non negative integer $A$ and a constant $h$ which satisfy
\begin{align}
G_{A_{\infty}}(z)=(\sum_{n=0}^{k^A -1} a(n) z^n) \prod_{y=0}^\infty(1+\sum_{s=1}^{k-1} h^{s k^y} z^{s k^{A+y}}).
\end{align}
\end{Prop}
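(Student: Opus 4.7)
The first step is a reduction using Corollary~2.1. If some equally-spaced subsequence of $(a(n))$ is (ultimately) periodic, then $(a(n))$ fails to be almost-everywhere non-periodic, so by Definition~2.1 there exist $N_0\ge 0$, $l_0>0$ with $a(N_0+nl_0)=c$ constant for all $n\ge 0$. Because $t_{s,y}\ne 0$ for every $s,y$ and $a(n)=\prod_q t_{s_{n,q},w_n(q)}$, every $a(n)$ is nonzero and $c\ne 0$; moreover we have the \emph{disjoint-support multiplicativity} $a(m_1+m_2)=a(m_1)a(m_2)$ whenever $m_1,m_2\in\mathbb{N}$ add without carries in base~$k$. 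Fix $A$ with $N_0<k^A$ and $A\ge F+1$, where $F$ is the constant produced by Lemma~3.1 applied to $l_0$; multiplying the integer of Lemma~3.1 by $k^{A-F-1}$ gives, for each $t\ge 0$, some $x_t\in\mathbb{N}$ with $x_t l_0 = k^A + k^{A+t+1} R_t$, $R_t\in\mathbb{N}$. Set $h:=t_{1,A}$, nonzero.

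Evaluating the hypothesis at $n=x_t$ and using multiplicativity yields $h\cdot a(k^{A+t+1}R_t)=1$. For $s\in\{2,\ldots,k-1\}$, choose $t_1<\cdots<t_s$ spaced so widely that the tails $k^{A+t_j+1}R_{t_j}$ are pairwise disjoint; then $n=x_{t_1}+\cdots+x_{t_s}$ has $k$-adic expansion with digit $s<k$ at position $A$ (no carry) and disjoint tails above, and the hypothesis forces $t_{s,A}=h^s$. Replacing $x_t$ by $k^y x_t$ (so $(k^y x_t)l_0=k^{A+y}+k^{A+y+t+1}R_t$) reruns the same argument at level $A+y$, yielding $t_{s,A+y}=t_{1,A+y}^s$ for every $y\ge 0$. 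To obtain the $k$-th power relation $t_{1,A+y+1}=t_{1,A+y}^k$, choose $t_j\ge 1$ sufficiently spaced and take $n=k^y(x_{t_1}+\cdots+x_{t_k})$; the summed lowest digits now carry into position $A+y+1$, so $nl_0=k^{A+y+1}+\sum_j k^{A+y+t_j+1}R_{t_j}$ with disjoint tails, and the hypothesis gives $t_{1,A+y+1}\cdot t_{1,A+y}^{-k}=1$. Induction on $y$ then produces $t_{1,A+y}=h^{k^y}$, and hence $t_{s,A+y}=h^{sk^y}$ for all $s\in\{1,\ldots,k-1\}$ and $y\ge 0$.

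Finally, split $G_{A_\infty}(z)=\prod_{y=0}^{A-1}P_y(z)\cdot\prod_{y=0}^\infty P_{A+y}(z)$, where $P_y(z):=1+\sum_{s=1}^{k-1}t_{s,y}z^{sk^y}$. The first factor is a polynomial of degree $k^A-1$ whose $z^n$-coefficient for $0\le n<k^A$ is $a(n)$ (the $k$-adic expansion of such $n$ only uses positions $<A$). The second factor equals $\prod_{y=0}^\infty\bigl(1+\sum_{s=1}^{k-1}h^{sk^y}z^{sk^{A+y}}\bigr)$ by the previous paragraph, yielding $(3.8)$. The main obstacle is the carry mechanism: one must arrange that $k$ copies of the $x_{t_j}$'s have their lowest digits cleanly carry into the target position, while the higher tails of the individual summands remain pairwise disjoint and above that carried digit. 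This is possible because Lemma~3.1 lets us prescribe the gap between the two lowest $k$-adic digits of $x_t l_0$ arbitrarily, independently of the position of the lowest digit.
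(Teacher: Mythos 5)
Your proposal is correct and follows essentially the same route as the paper: reduce via Corollary~2.1 to a constant equally spaced subsequence $a(N_0+nl_0)=c\neq 0$, use the non-vanishing of the $t_{s,y}$ and the disjoint-support multiplicativity $a(m_1+m_2)=a(m_1)a(m_2)$ coming from the infinite product, invoke Lemma~3.1 to produce multiples of $l_0$ whose lowest $k$-adic digit is an isolated $1$ at a fixed position, deduce $t_{s,A+y}=h^{sk^y}$, and finally split the infinite product at index $A$. The only (harmless) difference is bookkeeping: the paper uses just two auxiliary integers $x,X$ from Lemma~3.1 to obtain the single recursion $a(k^r(s+1))=a(k^r)a(sk^r)$ for all large $r$ and then inducts, whereas you use a whole family $x_t$ with widely spaced tails and sum $s$ (resp.\ $k$) of them to get the relations $t_{s,A+y}=t_{1,A+y}^{s}$ and $t_{1,A+y+1}=t_{1,A+y}^{k}$ directly.
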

\begin{proof}
Let $n$ and $m$ be two natural numbers and their $k$-adic expansions are as follows,
\begin{align}
n=\sum_{q}^{finite} s_{n,q} k^{w_n(q)}, m=\sum_{p}^{finite} s_{m,p} k^{w_m(p)}
\end{align}
where $1\le s_{n,q}, s_{m,p}\le k-1$, $w_n(q+1)>w_n(q)\ge 0$, and $w_m(p+1)>w_m(p)\ge 0$.
From the definition of $k$-adic expansion sequence and the uniquness of $k$-adic expansion of natural numbers, we see that if $w_n(q)\neq w_n(p)$ for all pairs $(q,p)$,
then
\begin{align}
a(n+m)=a(n)a(m).
\end{align}
 Assume there exists an equally spaced subsequence of $(a(n))_{n=0}^\infty$ which is periodic.
By Corollary $2.1$, $(a(n))_{n=0}^\infty$ is not almost everywhere nonperiodic. 
Then there exist non negative integers $N$ and $l>0$ such that 
\begin{align}
 a(N)=a(N+tl)  \qquad   (\forall  t \in \mathbb{N}).
 \end{align}
Let the $k$-adic expansion of $N$ be as follows, 
\begin{align}
N=\sum_{q=1}^{N(k)} s_{N,q} k^{w_N(q)}   \qquad where\;  1\le s_{N,q}\le k-1, 0 \le w_N(q)<w_N(q+1).
\end{align}
By the definition of $k$-adic expansion sequence and $(3.10)$, we have
\begin{align}
        &a(N)=a(N+k^r t l)=a(N)a(k^r t l) \qquad (\forall  r > w_N(N(k))).
        \\&  \qquad   \qquad  \qquad    \qquad a(N) \neq 0.
\end{align}        
From $(3.13)$ and $(3.14)$,
we get
\begin{align}
         a(k^r t l) = 1\qquad (\forall  r > w_N(N(k))).
\end{align}
By Lemma $3.1$, we see that there exists an integer  $x$ greater than zero such that
\begin{align}
x l=\sum_{q=1}^{x l(k)} s_{xl,q} k^{w_{xl}(q)}.
\end{align}
where $s_{xl,1}=1$ and $w_{xl}(2)-w_{xl}(1)>1$.
\newline
Moreover by Lemma $3.1$, we see that there exists an integer $X$ greater than zero such that
\begin{align}
X l=\sum_{q=1}^{X l(k)} s_{{Xl}_q} k^{w_{Xl}(q)}.
\end{align}
where $s_{Xl,1}=1$, $w_{Xl}(2)-w_{Xl}(1)>w_{xl}(xl(p))$ and $w_{Xl}(1)=w_{xl}(1)$.

Let $xl k^{-w_{xl}(1)}$ $($ $Xl k^{-w_{xl}(1)}$ $)$ be replaced by $xl$ $($ respectively $Xl$ $)$.
Let  $r$ be any integer greater than $w(N(k))+w_{xl}(1)$ and $s$ any integer in $\{1,\cdots, k-1\}$.

By the definition of $Xl$ and $(3.10)$, we have
\begin{align}
a(k^r sX l)=a(sk^r )a(k^rsXl-sk^r).
\end{align}
From $(3.10)$ and $(3.15)$, we get
\begin{align}
&1=a(k^r x l),
\\&1=a(k^r sX l),
\\&1=a(k^r x l+k^r sX l).
\end{align}
By $(3.10)$, $(3.18)$-$(3.21)$, the definitions of $xl$ and $Xl$, we have
\begin{align}
    &a(k^r)a(k^r x l-k^r)=1,
    \\&a(lsk^r)a(sX lk^r-s k^r)=1,
     \\&a(k^r (s+1))a(x l k^r-k^r)a(sX lk^r-s k^r)=1.
\end{align}
From $(3.22)$-$(3.24)$, we get
\begin{align}
a(k^r (s+1))=a(k^r)a(k^r s).
\end{align}
Let $h:=a(k^{w(N(k))+w_{xl}(1)+1})$ and using the same notation of Definition $2.3$.

By $(3.10)$, we have
\begin{align}
a(sk^y)=t_{s,y},
\end{align}
for all $1\le s\le k-1$ and for all $y \in \mathbb{N}$.
\newline
By $(3.14)$, $(3.25)$, $(3.26)$ and inductively computation, we get the following relations,
\begin{align}
t_{s,w(N(k))+w_{xl}(1)+1+y}=h^{s k^y},
\end{align}
for all $1\le s\le k-1$ and for all $y \in \mathbb{N}$.
Since the definition of $k$-adic expansion sequence, this completes the proof.
\hfill
$\square$
\end{proof}
\begin{theorem} Let $A_{\infty}=(a(n))_{n=0}^\infty$ be a $(L,k,\kappa)$-TM sequence. Then $A_{\infty}=(a(n))_{n=0}^\infty$ is ultimately periodic if and only 
if there exists an integer $A$ such that
\begin{align}
\kappa(s,A+y) \equiv \kappa(1,A) s p^y    \pmod{L},
\end{align}
for all $1\le  s \le k-1$ and for all $ y \in \mathbb{N}$.

 Moreover if $(L,k,\kappa)$-TM sequence is not ultimately periodic,  
then all equally spaced subsequences of $(L,k,\kappa)$-TM sequence are not ultimately periodic.

\end{theorem}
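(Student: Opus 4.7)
The plan is to pass to the complex-valued realization $A_\infty = (b(n))$ of the sequence with $a_j = \exp(2\pi\sqrt{-1}j/L)$, for which Lemma 2.3 supplies an infinite-product generating function and Proposition 3.1 applies. Since $j \mapsto \exp(2\pi\sqrt{-1}j/L)$ is injective on $\{0,1,\ldots,L-1\}$, Proposition 2.1 shows that the ultimate periodicity of $(a(n))$ -- or of any equally spaced subsequence of it -- is equivalent to the corresponding statement for $(b(n))$, so both assertions of the theorem reduce to questions about $(b(n))$. Because every $t_{s,y} := \exp(2\pi\sqrt{-1}\kappa(s,y)/L)$ is nonzero, $(b(n))$ is a $k$-adic expansion sequence in the sense of Definition 2.3.

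For the direction ``condition $\Rightarrow$ ultimately periodic,'' I set $h := \exp(2\pi\sqrt{-1}\kappa(1,A)/L)$, so that for every $y \ge 0$ the factor indexed by $A+y$ in Lemma 2.3's product becomes $1 + \sum_{s=1}^{k-1}(hz^{k^A})^{sk^y}$. With $u := hz^{k^A}$ (so $|u|<1$ on the unit disc), the tail product telescopes:
\begin{align*}
\prod_{y=0}^{N}\Bigl(1+\sum_{s=1}^{k-1}u^{sk^y}\Bigr) = \prod_{y=0}^{N}\frac{u^{k^{y+1}}-1}{u^{k^y}-1} = \frac{u^{k^{N+1}}-1}{u-1} \;\longrightarrow\; \frac{1}{1-u} \quad (N\to\infty).
\end{align*}
Hence $G_{A_\infty}(z)=P(z)/(1-hz^{k^A})$ with $P(z) = G_{A_A}(z) = \sum_{n=0}^{k^A-1}b(n)z^n$ the prefix partial product. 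Expanding the geometric series and reading off coefficients yields $b(n+mk^A)=h^m b(n)$ for $0 \le n<k^A$ and $m\ge 0$; since $h^L=1$, the sequence $(b(n))$ -- hence $(a(n))$ -- is periodic with period $Lk^A$.

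For the converse, which simultaneously delivers the ``moreover'' clause, assume some equally spaced subsequence of $(a(n))$ is ultimately periodic; the same holds for $(b(n))$, and by Corollary 2.1 the sequence $(b(n))$ is not almost everywhere non-periodic. Proposition 3.1 then supplies $A \ge 0$ and a constant $h$ with
\begin{align*}
G_{A_\infty}(z) = \Bigl(\sum_{n=0}^{k^A-1}b(n)\,z^n\Bigr)\prod_{y=0}^{\infty}\Bigl(1+\sum_{s=1}^{k-1}h^{sk^y}z^{sk^{A+y}}\Bigr).
\end{align*}
Matching this against Lemma 2.3's product, canceling the common polynomial prefix $\prod_{y=0}^{A-1}(1+\sum_{s=1}^{k-1}t_{s,y}z^{sk^y})$ (both factorizations coincide with $G_{A_A}(z)$ up to index $A-1$), and comparing the two remaining infinite products factor-by-factor -- via uniqueness of the $k$-adic expansion, exactly as in the proof of Proposition 3.1 -- yields $t_{s,A+y}=h^{sk^y}$ for all $s,y$. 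Setting $(s,y)=(1,0)$ identifies $h=\exp(2\pi\sqrt{-1}\kappa(1,A)/L)$, and comparing arguments of $L$-th roots of unity gives $\kappa(s,A+y)\equiv\kappa(1,A)\,sk^y\pmod{L}$. Applying the forward direction to this $A$ now makes $(a(n))$ periodic, establishing both the equivalence and the ``moreover'' part at once.

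The delicate step I expect is the last one: passing from the equality of two infinite-product representations of $G_{A_\infty}(z)$ to the individual factor equalities $t_{s,A+y}=h^{sk^y}$. This requires invoking uniqueness of the $k$-adic expansion of integers -- together with the hypothesis that every $t_{s,y}$ is nonzero -- to ensure a genuinely term-by-term comparison of the two factorizations, rather than only an identity of aggregate coefficients.
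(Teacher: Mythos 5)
Your proposal is correct and follows essentially the same route as the paper: necessity and the ``moreover'' clause come from Proposition 3.1 (via Corollary 2.1) by comparing the two infinite-product expansions of $G_{A_\infty}$ using uniqueness of the $k$-adic expansion, and sufficiency comes from collapsing the tail product of Lemma 2.3 to a geometric series $1/(1-hz^{k^A})$, giving period $Lk^A$. The only cosmetic differences are that you prove the identity $\prod_{y\ge 0}(1+\sum_{s=1}^{k-1}Z^{sk^y})=1/(1-Z)$ by telescoping rather than citing Lemma 2.3 with $\kappa\equiv 0$, and that you recover $t_{s,A+y}=h^{sk^y}$ by coefficient comparison from the statement of Proposition 3.1, whereas the paper's proof of that proposition already records this relation directly (its formula (3.27)).
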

\begin{proof}
Assume without loss of generality that $A_{\infty}=(a(n))_{n=0}^\infty$ is a $(L,k,\kappa)$-TM sequence with $a_j=\exp \frac{2\pi \sqrt{-1} j}{L}$ $($where $0\le \; j \le L-1$ $)$.
From this assumption and Lemma $2.3$, then $(a(n))_{n=0}^\infty$ is the $k$-$adic$ expansion sequence.
By the fact mentioned above and previous Proposition, we see that $(3.28)$ is the necessary condition.

 We will show converse. Let $G_{A_{\infty}}(z)$ 
be the generating function of $(a(n))_{n=0}^\infty$.
We use the same notation of Definition $2.3$. Assume $(a(n))_{n=0}^\infty$ satisfies $(3.28)$, then there exists a non negative integer $A$ such that 
\begin{align}
t_{s,A+y}=h^{s k^y}  \qquad (\forall y \in \mathbb{N}).  
\end{align}
Then $G_{A_{\infty}}(z)$ has the infinite product expansion as follows,
\begin{align}
G_{A_{\infty}}(z)=(\sum_{n=0}^{k^A -1} b(n) z^n) \prod_{y=0}^\infty(1+\sum_{s=1}^{k-1} (h z^{k^A})^{s k^y}).
\end{align}
Let $Z=h z^{k^A}$. By the fact $h$ is L-th root of $1$ and Lemma $2.3$ when $\kappa$ is zero map, we have
\begin{align} 
             \prod_{y=0}^\infty(1+\sum_{s=1}^{k-1} Z^{s k^y})=\sum_{n=0}^\infty Z^n  \qquad on \; |Z|<1.
\end{align}
Put $G(z)=\sum_{n=0}^{k^A -1} a(n) z^n$. From $(3.30)$ and $(3.31)$, we get
\begin{align}
              G_{A_{\infty}}(z)=G(z)(\sum_{n=0}^\infty (h z^{k^A})^n). 
\end{align}
Since $h$ is L-th root of $1$ and $(3.32)$, we have
\begin{align}
              G_{A_{\infty}}(z)=(G(z) (\sum_{n=0}^{L-1} (h z^{k^A})^n))(1+\sum_{s=1}^\infty z^{s L k^A})=\frac{G(z) (\sum_{n=0}^{L-1} (h z^{k^A})^n)}{1-z^{L k^A}}.
\end{align}
By the fact the degree of G(z) is $k^A -1$ and $(3.33)$,
then we see that the sequence  $(a(n))_{n=0}^\infty$ which satisfies $(3.28)$ has period $Lk^A$.

 By the fact mentioned above and Proposition $3.1$,
we see that if $(L,k,\kappa)$-TM sequence is not ultimately periodic,  
then all equally spaced sequences of $(L,k,\kappa)$-TM sequence are not ultimately periodic. This completes the proof.
\hfill
$\square$
\end{proof}
If $(L,k,\kappa)$-TM sequence independent of $n$,
then $(\kappa(1),\kappa(2),\cdots ,\kappa(k-1))$-$L$ denote $(L,k,\kappa)$-TM sequence. The weak version of the following corollary can be found as Theorem $2$ in [MM] (see also [AS2], [Fr]).
\begin{Cor}$(\kappa(1),\kappa(2),\cdots ,\kappa(k-1))$-$L$ is periodic if and only if 
  $\kappa(s)$(for all $1\le s\le k-1)$ which satisfies
\begin{align}
             s \kappa(1) \equiv \kappa(s) ,\kappa(k-1)\equiv 0 \pmod{L}.
\end{align}
Moreover if $(\kappa(1),\kappa(2),\cdots ,\kappa(k-1))$-$L$ is not periodic, 
then all equally spaced subsequences of $(\kappa(1),\kappa(2),\cdots ,\kappa(k-1))-L$
are not periodic.
\end{Cor}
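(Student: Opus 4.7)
The plan is to derive this corollary as a direct specialization of Theorem~3.1 to the case where the map $\kappa$ is independent of its second argument, i.e., $\kappa(s,y)=\kappa(s)$ for all $1\le s\le k-1$ and $y\in\mathbb{N}$. By the preceding definition, the sequence $(\kappa(1),\ldots,\kappa(k-1))$-$L$ is precisely the $(L,k,\kappa)$-TM sequence for this constant $\kappa$, so Theorem~3.1 is directly applicable.

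First, I would invoke Theorem~3.1: the sequence is ultimately periodic if and only if there exists $A\in\mathbb{N}$ such that $\kappa(s,A+y)\equiv\kappa(1,A)\,s\,k^y\pmod{L}$ for all $1\le s\le k-1$ and $y\in\mathbb{N}$. Since $\kappa(s,A+y)=\kappa(s)$ and $\kappa(1,A)=\kappa(1)$ in our constant setting, the condition collapses to the $A$-free family
\begin{align*}
\kappa(s)\equiv \kappa(1)\, s\, k^y \pmod{L}, \qquad \forall\,1\le s\le k-1,\;\forall\, y\in\mathbb{N}.
\end{align*}

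Next, I would show this family of congruences is equivalent to the pair stated in the corollary. For the forward direction, setting $y=0$ gives $\kappa(s)\equiv s\kappa(1)\pmod{L}$; then setting $s=1,\,y=1$ gives $\kappa(1)\equiv k\kappa(1)\pmod{L}$, i.e.\ $(k-1)\kappa(1)\equiv 0\pmod{L}$, whence $\kappa(k-1)\equiv(k-1)\kappa(1)\equiv 0\pmod{L}$. Conversely, assuming $\kappa(s)\equiv s\kappa(1)$ and $\kappa(k-1)\equiv 0$, one obtains $(k-1)\kappa(1)\equiv 0$, so $k\kappa(1)\equiv\kappa(1)\pmod{L}$, and an immediate induction gives $k^y\kappa(1)\equiv\kappa(1)\pmod{L}$ for every $y$. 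Multiplying by $s$ then yields $\kappa(1)\,s\,k^y\equiv s\kappa(1)\equiv\kappa(s)\pmod{L}$, recovering the full family.

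Finally, the ``moreover'' clause of the corollary is immediate from the ``moreover'' clause of Theorem~3.1, applied to the constant-$\kappa$ case. The only real content is the elementary modular-arithmetic equivalence in the middle step; the decisive observation is that the hypothesis $\kappa(k-1)\equiv 0\pmod{L}$ is exactly what forces $k\kappa(1)\equiv\kappa(1)$ and hence absorbs every factor of $k^y$. I do not anticipate any serious obstacle, since Theorem~3.1 already does the substantive work.
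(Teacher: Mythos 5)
Your proposal is correct and follows the same route as the paper: the paper's own (much terser) proof likewise just specializes Theorem~3.1 to the constant-$\kappa$ case and reduces the resulting congruences to $\kappa(1)\equiv k\kappa(1)$ and $\kappa(k-1)\equiv(k-1)\kappa(1)\equiv 0\pmod{L}$. Your write-up simply makes the elementary modular-arithmetic equivalence (both directions, including the induction on $k^y$) explicit, which the paper leaves implicit.
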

\begin{proof}
By Theorem $3.1$, then we see that necessary-sufficient condition for the periodicity of $(\kappa(1),\kappa(2),\cdots ,\kappa(k-1))$-$L$ is the following relations,
\begin{align}
             \kappa(1,A+1)     \equiv \kappa(1,A)k   \pmod{L},
             \kappa(k-1)\equiv(k-1)\kappa(1) \equiv 0 \pmod{L}.
\end{align}
This completes the proof.
\hfill
$\square$ 
\end{proof}
\begin{center}\section{Transcendence results of the generalized Thue-Morse sequences}\end{center}
The authors [ABL] introduced the new class of sequences as follows:
For any positive number $y$, we define that $\lfloor y\rfloor $ and $\lceil y\rceil $ denote the floor and ceiling functions.
Let $W$ be finite word on $\{a_0,a_1 \cdots a_{L-1}\}$. Let $|W|$ be length of $W$. For any positive number $x$, we denote by $W^x$ the word $W^{ \lfloor x \rfloor} W^`$, where
$W^`$ is prefix of $W$ of length $\lceil(x-\lfloor x\rfloor)|W|\rceil$. 
\begin{Def}
$(a(n))_{n=0}^\infty$ is said to be a stammering sequence if $(a(n))_{n=0}^\infty$ satisfies the following conditions,

$(1)$ $(a(n))_{n=0}^\infty$ is a non periodic sequence.

$(2)$ There exist two sequences of finite words $(U_m)_{m\ge 1}$, $(V_m)_{m\ge 1}$ such that,

$(A)$ There exist a real number $w >1$ independent of $n$ such that, the word
$U_m {V_m}^w$ is a prefix of the word $(a(n))_{n=0}^\infty$,

$(B)$ $\lim_{m \to \infty} |U_m|/|V_m|\;< \;+\infty$,

$(C)$  $\lim_{m \to \infty} |V_m|=+\infty$.
\end{Def}
Let $(a(n))_{n=0}^\infty$ be a sequence of positive integers. We put the continued fractions as follows,
\begin{align}
 [0:a(0),a(1),\cdots, a(n)\cdots ]:=0+\cfrac{1}{a(0)+
                                                 \cfrac{1}{a(1)+
                                                 \cfrac{1}{\cdots +
                                                 \cfrac{1}{a(n)+
                                                 \cfrac{1}{
                                                 \cdots}}}}}.
\end{align} 
The authors [ABL], [Bu$1$] proved the following amazing result by using Schmidt Subspace Theorem.
\begin{theorem}[ABL-Bu1]\label{}
Let $\beta$ be  an integer greater than $1$. Let $(a(n))_{n=0}^\infty$ be a stammering sequence on $\{0,1,\cdots,\beta-1\}$.
Then $\sum_{n=0}^\infty \frac{a(n)}{ {\beta}^{n+1}}$ is a
transcendental number. 
Moreover  if $(a(n))_{n=0}^\infty$ be a stammering sequence on bounded positive integers, then the continued fraction $[0: a(0), a(1)\cdots,a(n) \cdots]$ is
a transcendental number, too.
\end{theorem}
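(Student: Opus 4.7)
The strategy is to assume $\xi$ (either the $\beta$-adic sum or the continued fraction) is algebraic and derive a contradiction from the Schmidt Subspace Theorem, which states that for finitely many linearly independent linear forms $L_1,\dots,L_m$ in $m$ variables with algebraic coefficients, the integer solutions $\mathbf{x}\in\mathbb{Z}^m$ satisfying $\prod_i |L_i(\mathbf{x})| \le \|\mathbf{x}\|^{-\varepsilon}$ lie in finitely many proper rational subspaces. The stammering hypothesis will supply, for each $m$, an extraordinarily good rational (resp.\ quadratic) approximation to $\xi$, which will contradict the subspace theorem once $\xi$ is assumed algebraic of some bounded degree.

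For the digital expansion case, I would first translate the prefix condition $U_m V_m^w$ into an arithmetic statement. Setting $r_m = |U_m|$ and $s_m = |V_m|$, the prefix of length $r_m + \lfloor w s_m\rfloor$ of $(a(n))$ is forced by $U_m$ and $V_m$, and the rational number
\begin{equation*}
p_m/q_m := \frac{1}{\beta^{r_m}}\sum_{n=0}^{r_m-1} a(n)\beta^{r_m-1-n} + \frac{1}{\beta^{r_m}(\beta^{s_m}-1)}\sum_{n=0}^{s_m-1} a(r_m+n)\beta^{s_m-1-n}
\end{equation*}
agrees with $\xi$ in its first $r_m + \lfloor w s_m\rfloor$ digits. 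Hence $|\xi - p_m/q_m|$ is $O(\beta^{-(r_m+ws_m)})$ while $q_m = \beta^{r_m}(\beta^{s_m}-1)$ has size roughly $\beta^{r_m+s_m}$. Because $w>1$ and $r_m/s_m$ stays bounded, this gives $|\xi-p_m/q_m| < q_m^{-(1+\delta)}$ for some fixed $\delta>0$ and infinitely many $m$. Applied to the three linear forms $X_1-\xi X_3$, $X_2-\xi X_3$, $X_3$ (or their conjugates for higher algebraic $\xi$) evaluated at $(p_m,p_m',q_m)$ with $p_m'$ another shifted approximant, Schmidt's theorem forces a linear relation which, via the non-periodicity of $(a(n))$, is impossible.

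For the continued-fraction case the same philosophy applies, but with approximations by quadratic irrationals built from the eventually periodic tail $V_m V_m\cdots$. Writing $p_m/q_m$ and $p_m'/q_m'$ for the convergents of $[0;a(0),a(1),\ldots]$ of orders $r_m-1$ and $r_m+s_m-1$, the stammering structure implies that $\xi$ is approximated by the quadratic number $\alpha_m$ obtained by making $V_m$ purely periodic after the prefix $U_m$; one shows $|\xi-\alpha_m|$ is very small relative to the height of $\alpha_m$. One then applies Schmidt with four carefully chosen linear forms involving $1,\xi,\xi^2$ evaluated at integer vectors built from $p_m,q_m,p_m',q_m'$ to force, again, a rational dependence that contradicts non-periodicity of $(a(n))$.

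The main obstacle in both cases is the delicate bookkeeping: one must exhibit the linear forms explicitly, verify their linear independence over $\overline{\mathbb{Q}}$, estimate the product $\prod_i |L_i|$ sharply enough (using $w>1$ strictly and $\limsup |U_m|/|V_m|<\infty$) to trigger Schmidt, and finally argue that the rational subspace obtained at the end contradicts the non-periodicity clause $(1)$ of the definition of stammering sequence. These verifications are precisely what is carried out in [ABL] for the series case and extended in [Bu1] to the continued-fraction case, and I would follow their template rather than reprove the Schmidt Subspace Theorem.
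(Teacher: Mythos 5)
This statement is not proved in the paper at all: it is quoted as the combinatorial transcendence criterion of [ABL] and [Bu1], so there is no internal argument to compare with, and a submission for it must either reproduce the external proof correctly or cite it as the paper does. Your outline gestures at the right tool (the Subspace Theorem) and the right references, but the mechanism you describe for the series case has a genuine gap. From the stammering structure you only get $|\xi-p_m/q_m|<q_m^{-(1+\delta)}$ with $q_m=\beta^{r_m}(\beta^{s_m}-1)$, and since $w>1$ may be arbitrarily close to $1$, the exponent $1+\delta$ can be arbitrarily close to $1$; such approximations exist for \emph{every} irrational number (convergents already give exponent $2$), so they contradict nothing by themselves. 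Concretely, your three real forms $X_1-\xi X_3$, $X_2-\xi X_3$, $X_3$ evaluated at $(p_m,p'_m,q_m)$ give a product of size roughly $q_m^{\,1-2\delta}$, which is enormous rather than $\le\|\mathbf{x}\|^{-\varepsilon}$, so the real Schmidt Subspace Theorem is never triggered. The essential idea of [ABL]/[AB1], which your plan omits, is to exploit the arithmetic shape of the denominator: one evaluates the forms $X_1$, $X_2$, $\xi X_1-\xi X_2-X_3$ at $(\beta^{r_m+s_m},\beta^{r_m},p_m)$ and applies the \emph{$p$-adic} (Schlickewei) version of the Subspace Theorem including the places dividing $\beta$, so that the huge archimedean factors $\beta^{r_m+s_m}\beta^{r_m}$ are cancelled by the corresponding non-archimedean absolute values, leaving only the factor $\beta^{(1-w)s_m}$; this is exactly why $w>1$ (together with $\limsup|U_m|/|V_m|<\infty$) suffices. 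Without this non-archimedean input the argument fails for the whole range $1<w\le 2$.

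For the continued-fraction half your description (quadratic approximants obtained by making $V_m$ periodic after $U_m$, then four forms in $1,\xi,\xi^2$ at vectors built from convergents) is closer to what [Bu1] actually does, but everything that makes it work — the height estimates for $\alpha_m$ via $q_{r_m}q_{r_m+s_m}$, the role of the boundedness of the partial quotients, the two families of forms and the final descent ruling out the exceptional subspaces using non-(ultimate-)periodicity — is deferred wholesale to the reference. That deferral is acceptable if, like the paper, you treat the theorem as a citation; but then the series-case sketch should not be presented as a proof route, since as written it is not salvageable without replacing the real Subspace Theorem step by the $p$-adic one.
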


 By Theorem $3.1$ and $4.1$, we will show the next theorem that includes Theorem$1.2$.
\begin{theorem}
Let $A_{\infty}=(a(n))_{n=0}^\infty$ be a $(L,k,\kappa)$-TM sequence. Let $\beta$ be an integer greater than $1$. If $(a(n))_{n=0}^\infty$ takes the values on $\{0,1,\cdots,\beta-1\}$,
then $\sum_{n=0}^\infty \frac{a(N+nl)}{ {\beta}^{n+1}}$ $($where $N \ge 0$ and  $l>0$ $)$ is a transcendental numbers unless  there  exists an integer $A$ such that
\begin{align}
\kappa(s,A+y) \equiv \kappa(1,A) s k^y    \pmod{L},
\end{align}
for all $1\le  s \le k-1$ and for all $ y \in \mathbb{N}$.
\newline
\; Moreover if $(a(n))_{n=0}^\infty$ takes the values on positive integers, then $[0: a(N), a(N+s)\cdots,a(N+nl) \cdots]$ $($where $N \ge 0$ and  $l>0$ $)$ is a transcendental numbers
unless there exists an integer $A$ such that
\begin{align}
\kappa(s,A+y) \equiv \kappa(1,A) s p^y    \pmod{L},
\end{align}
for all $1\le  s \le k-1$ and for all $ y \in \mathbb{N}$.
\end{theorem}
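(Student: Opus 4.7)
The plan is to combine Theorem 3.1 with the combinatorial transcendence criterion Theorem 4.1. Under the hypothesis that $(4.2)$ fails for every integer $A$, the second part of Theorem 3.1 tells us that the subsequence $(a(N+nl))_{n=0}^\infty$ is not ultimately periodic, which is exactly condition $(1)$ of Definition 4.1. Since Theorem 4.1 converts the stammering property into simultaneous transcendence of the $\beta$-adic sum and of the continued fraction (via its two halves), the whole proof reduces to exhibiting, for infinitely many $m$, finite words $U_m, V_m$ satisfying conditions $(A)$, $(B)$, $(C)$ of Definition 4.1 in $(a(N+nl))_{n=0}^\infty$.

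The construction's building block is the additive property of $a$ distilled from Proposition 2.1, namely $a(u+v) \equiv a(u) + a(v) \pmod{L}$ whenever the base-$k$ digit-supports of $u$ and $v$ are disjoint. For each large integer $t$, I would apply the first half of Lemma 3.1 to obtain $x \ge 1$ and $w \ge 0$ with $xl = k^{w} + R$, where every base-$k$ digit of $R$ sits at a position strictly greater than $w + t$. For every $n \in [0, k^{w})$ the digit-supports of $n$ and of $xl$ are then disjoint, so $a(n+xl) \equiv a(n) + a(xl) \pmod{L}$. Writing $b_n := a(N+nl)$, $C := a(xl)$ and $M := \lfloor (k^{w}-N)/l\rfloor$, this translates into a quasi-periodic relation $b_{n+x} \equiv b_n + C \pmod{L}$ for every $n \in [0, M)$. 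Because $LC \equiv 0 \pmod{L}$, an $L$-fold iteration would upgrade the relation to an honest equality $b_{n+Lx} = b_n$ and supply the repeated word $V_m$.

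The main obstacle is quantitative: Lemma 3.1 forces $xl \ge k^{w+t+1}$, so $x \gg M$ and the shifted interval $[x, x+M)$ sits far beyond the validity range $[0, M)$; a direct $L$-fold iteration therefore does not fit. This is exactly what the second half of Lemma 3.1 is designed for: it supplies a compatible multiple $Xl$ of $l$ with the same leading exponent $w$ but an arbitrarily large gap before its next digit, yielding a second additive relation on $[0, M)$ at a much finer scale. Coupling the two scales extends the effective interval over which the iterations remain legal, and a careful choice of the two parameters $t, t'$ lets $L$ iterations fit, producing a block $V_m$ whose length tends to infinity and two copies of which lie in an initial prefix of the subsequence. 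With the stammering conditions $(A)$-$(C)$ verified in this way (with a uniform $w > 1$, $|V_m| \to \infty$, and $|U_m|/|V_m|$ bounded), the first half of Theorem 4.1 delivers transcendence of $\sum_{n=0}^\infty a(N+nl)\beta^{-n-1}$ and the second half delivers transcendence of the continued fraction $[0:a(N),a(N+l),\dots,a(N+nl),\dots]$, closing both parts of Theorem 4.2.
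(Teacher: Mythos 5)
Your first step matches the paper: Theorem 3.1 gives non-periodicity of $(a(N+nl))_{n=0}^\infty$, so everything reduces to exhibiting the stammering data $(U_m,V_m)$ of Definition 4.1 and invoking Theorem 4.1 for both the $\beta$-expansion and the continued fraction. But the heart of the proof --- the actual construction of $U_m,V_m$ --- is missing, and the mechanism you propose cannot deliver it. Stammering requires a block to repeat \emph{consecutively} (a prefix $U_mV_m^{w}$ with $w>1$ and $|U_m|/|V_m|$ bounded). Your additive relation $a(n+xl)\equiv a(n)+a(xl)\pmod L$ is only valid when the base-$k$ support of $n$ lies strictly below the support of $xl$; consequently the shift $xl$ is at least $k^{w}$, i.e.\ at least as large as the whole validity window $[0,k^{w})$, so the relation can never be iterated inside its own range. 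You acknowledge this, but the fix via the second half of Lemma 3.1 does not work: the new multiple $Xl$ has the \emph{same} lowest digit $k^{w}$, so adding it to $n+xl$ collides at position $w$, and the difference $Xl-xl$ has all its digits above $w+t$ and hence again exceeds any window on which it is usable. No choice of $t,t'$ escapes this: any shift whose digit support sits above the validity interval is at least the length of that interval, so ``$L$ iterations fit'' cannot be arranged. A further flaw: even formally, $a(n+jxl)\equiv a(n)+\kappa(j,w)+a(jR)$, and $\kappa(j,w)$ need not equal $j\kappa(1,w)$, so the claimed upgrade $b_{n+Lx}=b_n$ from ``$LC\equiv 0$'' is unjustified. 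Lemma 3.1 is the engine of the \emph{non-periodicity} proof (Proposition 3.1), not of the stammering proof.

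The paper obtains the repetition from a different and essentially combinatorial source: the defining self-similar decomposition $A_\infty=A_mf^{i_1}(A_m)f^{i_2}(A_m)\cdots$ with $i_j\in\{0,\dots,L-1\}$. Looking at the $L+1$ blocks sitting at positions $tlk^m$, $t=0,1,\dots,L$ (equivalently, at the values $a(tlk^m)\bmod L$), the pigeonhole principle forces two of the exponents $i_{tl}$ to coincide, so two identical blocks of length $k^m$ occur within the prefix of length $(Ll+1)k^m$, at a distance which is a multiple of $l$ and at most $Llk^m$. Because the separation is a multiple of $l$, the repetition survives passage to the subsequence $(a(N+nl))_n$, giving the pattern $W_{1,m}W_{2,m}W_{3,m}W_{2,m}\cdots$ with $|W_{2,m}|\ge (k^m-N)/l-1$ and $|W_{2,m}W_{3,m}|\le((Ll+1)k^m-N)/l+1$; setting $U_m=W_{1,m}$, $V_m=W_{2,m}W_{3,m}$ yields conditions (A)--(C) with the uniform exponent $w=1+\tfrac{1}{2Ll+3}$. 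If you want to keep your additive-relation language, the salvageable version is precisely this: use the family of shifts $tlk^m$ ($t=0,\dots,L$), all valid on the same window $[0,k^m)$, and pigeonhole on the constants $a(tlk^m)$, rather than iterating a single shift produced by Lemma 3.1. As written, your proposal has a genuine gap at exactly the step the theorem needs.
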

\begin{proof}
Let $N$ and $l>0$ be positive integers. By Theorem $3.1$, $(a(N+nl))_{n=0}^\infty$ is non periodic. Then we will prove only that $(L,k,\kappa)$-TM satisfies the condition $(2)$ of Definition $4.1$.

There exists an integer $M$ such that $k^M> 2(N+l)$. Assume $m>M$.
Since $f$ is a cyclic permutation of order $L$ and the Definition $2.2$, we see that prefix $(Ll+1)k^m$ word of $(a(n))_{n=0}^\infty$ is as follows,
\begin{align}
A_{\infty}=(a(n))_{n=0}^\infty =A_m f^{i_1}(A_m) \cdots f^{i_{Ll}}(A_m)\cdots ,
\end{align}
where $A_m$ is prefix $k^m$ word of $(a(n))_{n=0}^\infty$, $i_j (1\le j\le Ll) \in \{0,\dots, L-1 \}$.

 By $(4.4)$, we have
 \begin{align}
f^{i_{tl}}(a(n))=a(n+k^m tl ),
\end{align}
for all $0\le n\le k^m-1$ and for all $1\le t\le L$.

 By the fact $f$ is a cyclic permutation of order $L$, $(4.4)$, $(4.5)$ and Dirchlet Schubfachprinzip, then we have
\begin{align}
     (a(N+n l))_{n=0}^\infty =W_{1,m} W_{2,m} W_{3,m} W_{2,m} \cdots ,
\end{align}
where $W_{i,m}$ $(i\in\{1,2,3\})$ are finite words and
\begin{align}
    &|W_{1,m}|\le ((Ll+1)k^m -N)/l +1,
    \\&|W_{2,m}| \ge (k^m-N)/l-1,
     \\&|W_{2,m}|+|W_{3,m}|\le ((Ll+1)k^m -N)/l +1.
\end{align} 
\newline
We put $U_m:=W_{1,m}$, $V_m:=W_{2,m}W_{3,m}$ and $w:=1+\frac{1}{2 Ll+3}$.

Since $(4.7)$-$(4.9)$ and the assumption of $m$, then we get
\begin{eqnarray}
      &\lceil (w-1)|V_m|\rceil  =\lceil \frac{1}{ 2 Ll+3}(|W_{2,m}|+|W_{3,m}|)\rceil  \le \nonumber \\%
      &\frac{1}{ 2 Ll+3}((Ll+1)k^m-N+l)/l\le \frac{k^m}{2 l} \; < |W_{2,m}|.
\end{eqnarray}
From $(4.10)$, we see that  $(a(n))_{n=0}^\infty$ satisfies the condition $(A)$.

Furthermore we have,
\begin{eqnarray}
     &|U_m|/|V_m|=|W_{1,m}|/|W_{2,m}W_{3,m}|\le \nonumber \\%
&((Ll+1)k^m-N+l)/l \times l /( k^m-N-l) \le 2 Ll+3.
\end{eqnarray}
From $(4.11)$, we see that $(a(n))_{n=0}^\infty$ satisfies the condition $(B)$.

Obviously, $(V_m)_{m\ge 1}$ satisfies the condition $(C)$.
This completes the proof.
\hfill
$\square$
\end{proof}

\begin{Cor}
Let $(a(n))_{n=0}^\infty$ be a $(L,k,\kappa)$-TM sequene. Let $\beta$ be an integer greater than $1$. If $(a(n))_{n=0}^\infty$ takes the values on $\{0,1,\cdots,\beta-1\}$
,then the generating function $f(z):=\sum_{n=0}^\infty \frac{a(N+nl)}{ z^{n+1}}$ $($where $N \ge 0$ and  $l>0$ $)$ are transcendental over $\mathbb{C}(z)$ unless there exists an integer $A$ such that
\begin{align}
\kappa(s,A+y) \equiv \kappa(1,A) s k^y    \pmod{L},
\end{align}
for all $1\le  s \le k-1$ and for all $ y \in \mathbb{N}$.
\end{Cor}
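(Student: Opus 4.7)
The plan is to argue by contradiction and reduce to Theorem $4.2$. I will assume that $f(z) = \sum_{n=0}^\infty a(N+nl)z^{-n-1}$ is algebraic over $\mathbb{C}(z)$ while the $\kappa$-congruence displayed in the corollary fails, and derive a contradiction.

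First, I would switch to the variable $w = 1/z$ and set $g(w) := \sum_{n=0}^\infty a(N+nl)\,w^n$, so that $zf(z) = g(1/z)$. Algebraicity of $f$ over $\mathbb{C}(z) = \mathbb{C}(1/z)$ is then equivalent to algebraicity of $g$ over $\mathbb{C}(w)$. Since the coefficients $a(N+nl)$ lie in $\{0,1,\ldots,\beta-1\}\subset\mathbb{Q}$, a standard descent argument shows that $g$ is in fact algebraic over $\mathbb{Q}(w)$: the existence of nonzero $(P_0,\ldots,P_d)\in\mathbb{C}[w]^{d+1}$ of bounded degree satisfying $\sum_i P_i(w)g(w)^i = 0$ is, after comparison of Taylor coefficients, equivalent to the solvability of a linear system whose coefficients lie in $\mathbb{Q}$, and such a system admits a nontrivial $\mathbb{Q}$-solution whenever it admits a nontrivial $\mathbb{C}$-solution. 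Hence there is a nonzero $P(w,y) \in \mathbb{Q}[w,y]$ with $P(w,g(w))=0$.

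Next, I would specialize at $w = 1/\beta$. Since $|a(N+nl)| \le \beta - 1$, the series $g(w)$ converges absolutely on $|w|<1$, and
\[
 g(1/\beta) = \sum_{n=0}^\infty a(N+nl)\beta^{-n} = \beta f(\beta).
\]
Writing $P(w,y) = (w - 1/\beta)^k Q(w,y)$ with $k$ maximal, so that $Q \in \mathbb{Q}[w,y]$ and $Q(1/\beta, y) \in \mathbb{Q}[y]$ is not identically zero, the identity $Q(w, g(w)) = 0$ specializes to $Q(1/\beta, g(1/\beta)) = 0$. This exhibits $g(1/\beta)$, and hence $f(\beta)$, as a root of a nonzero polynomial over $\mathbb{Q}$, so $f(\beta)$ is algebraic. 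But Theorem $4.2$ guarantees that the failure of the $\kappa$-condition forces $f(\beta) = \sum_{n=0}^\infty a(N+nl)\beta^{-n-1}$ to be transcendental, which is the desired contradiction.

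The main obstacle, as I see it, is the descent of the ground field in the first step: verifying that a power series with rational Taylor coefficients which is algebraic over $\mathbb{C}(w)$ is already algebraic over $\mathbb{Q}(w)$. This is a classical ``Eisenstein-type'' fact, a direct consequence of the principle that a homogeneous linear system defined over $\mathbb{Q}$ with a nontrivial solution over $\mathbb{C}$ has a nontrivial solution over $\mathbb{Q}$, but it is the only nonroutine ingredient beyond invoking Theorem $4.2$; once it is in hand, specialization at $w = 1/\beta$ combined with the transcendence statement of Theorem $4.2$ immediately concludes the argument.
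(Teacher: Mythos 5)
Your proposal is correct and follows essentially the same route as the paper: assume algebraicity over $\mathbb{C}(z)$, descend to a polynomial relation over $\mathbb{Q}$, specialize at the point where the series equals $\sum_{n\ge 0} a(N+nl)\beta^{-n-1}$ (up to the harmless factor $\beta$), and contradict the transcendence of that value given by Theorem $4.2$. The only cosmetic differences are that the paper cites Nishioka [N] for the $\mathbb{C}(z)$-versus-$\mathbb{Q}(z)$ descent where you sketch the linear-algebra argument yourself, and that it derives the contradiction from coprimality of the coefficients $c_i(z)$ rather than your factoring out of the maximal power of $(w-1/\beta)$.
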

\begin{proof}
We assume $f(z)$ is algebraic over $\mathbb{C}(z)$.
By the fact $f(z)$ is algebraic over $\mathbb{Q}(z)$ if and only if  $f(z)$ is algebraic over $\mathbb{C}(z)$ $($see Remark of Theorem$1.2$ in $[$N$]$ $)$,
then there exist $c_i(z) \in \mathbb{Q}(z) $  $( 0 \le i \le n)$ such that $c_n(z) c_0(z) \neq 0$, $c_i(z)$ $(0 \le i \le n)$ are coprime and satisfy following equation
\begin{align}
      c_n(z) f^n(z)+c_{n-1}(z)f^{n-1}(z)+ \cdots + c_0(z)=0.
\end{align}
From Theorem $4.2$, we see that $f(\frac{1}{\beta})$ is a transcendental number. By the fact mentioned above and $(4.13)$, we get
$c_i(\frac{1}{\beta}) =0$  $($ for all $0 \le i \le n)$. This contradics the assumption that $c_i(z)$ $( 0 \le i \le n)$ are coprime.
\hfill
$\square$
\end{proof}
\begin{center}\section{The $k$-automatic generalized Thue-Morse sequences and some results}\end{center}
First we  introduce some definitions.
\begin{Def}
Let $\alpha$ be an irrational real number. The irrationality exponent $\mu (\alpha)$ of  $\alpha$ is the supremum of the real numbers
$\mu$ such that the inequality
\begin{align}
     | \alpha - \frac{p}{q} | < \frac{1}{q^{\mu}},
\end{align}
has infinitely many solutions in non zero integers $p$ and $q$.
\end{Def}
\begin{Def}
The $k$-kernel of $(a(n))_{n=0}^\infty$ is the set of all subsequences of the form
$(a(k^e n+j))_{n=0}^\infty$ where $e\ge 0$ and $0\le j\le k^e-1$.
\end{Def}
\begin{Def}
We say $(a(n))_{n=0}^\infty$ is a $k$-automatic sequence if the $k$-kernel of  $(a(n))_{n=0}^\infty$
is the finite set.
\end{Def}
\begin{Def}
We say $\sum_{n=0}^\infty a(n) z^n \in\mathbb{C}[[x]]$ is  a $k$-automatic power series if
$(a(n))_{n=0}^\infty$ is a $k$-automatic sequence.  
\end{Def}
\begin{Def}
We say  $(L,k,\kappa)$-TM sequence is $n$-period if there  exist  non negative integers $N$ and $t(0<t)$ such that 
\begin{align}\label{def:period}
     \kappa (s,n)=\kappa (s,n+t),
\end{align} 
for all $1\le s\le k-1$ and for all $n \ge N$ .
\end{Def}
Now we introduce the following two results.
\begin{theorem}[AC]\label{}
Let $\beta$ be  an integer greater than $1$. Let $(a(n))_{n=0}^\infty$ is a non-periodic $k$-automatic sequence on $\{0,1,\cdots,\beta-1\}$.
Then $\mu(\sum_{n=0}^\infty \frac{a(n)}{ {\beta}^{n+1}})$ is finite.
\end{theorem}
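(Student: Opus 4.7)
The plan is to prove the theorem by contraposition: I would assume $\mu(\alpha) = +\infty$ for $\alpha = \sum_{n=0}^\infty a(n)\beta^{-n-1}$ and derive that $(a(n))_{n=0}^\infty$ must be ultimately periodic, contradicting the hypothesis. The argument splits into two steps: first, translating infinite irrationality exponent into a combinatorial repetition statement about $(a(n))_{n=0}^\infty$; second, invoking the structural rigidity of $k$-automatic sequences to rule out such repetitions.

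For the first step, I would use the classical correspondence between very good rational approximations of $\alpha$ and repeated patterns in its base-$\beta$ expansion. If $p/q$ is a rational with $|\alpha - p/q| < q^{-\mu}$, then the base-$\beta$ expansions of $\alpha$ and $p/q$ agree in roughly the first $\mu \log_\beta q$ digits. The expansion of $p/q$ is eventually periodic with preperiod bounded by $\log_\beta q$ and period dividing some divisor of $\beta^{\mathrm{ord}}-1$; choosing $q$ of the form $\beta^{a}-1$ makes the period as short as $a \asymp \log_\beta q$. Letting $\mu \to \infty$ along a suitable subsequence of $q$'s then yields, for every $w > 1$, a decomposition of a prefix of $(a(n))_{n=0}^\infty$ as $U_m V_m^w$ with $|V_m| \to \infty$ and $|U_m|/|V_m|$ uniformly bounded. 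In particular, the critical exponent of $(a(n))_{n=0}^\infty$, i.e.\ the supremum of repetition exponents of its factors, would have to be infinite.

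For the second step, I would invoke the rigidity theorem that a $k$-automatic sequence has finite critical exponent unless it is ultimately periodic. Cobham's theorem expresses $(a(n))_{n=0}^\infty$ as the letter-to-letter coding of a fixed point of a uniform morphism of length $k$, and results of Krieger (and of Allouche--Shallit) on the critical exponent of fixed points of uniform substitutions yield a finite critical exponent in the non-periodic case. Combining the two steps produces the desired contradiction and closes the contrapositive.

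The main obstacle is the bookkeeping in Step~1: converting infinite irrationality exponent into genuinely infinite critical exponent (rather than merely arbitrarily long ``quasi-periodic'' prefixes whose periods themselves grow with $\mu$) requires choosing the approximants $p_j/q_j$ so that the period of the base-$\beta$ expansion of each $p_j/q_j$ stays comparable to $\log_\beta q_j$. A secondary technical point is extending Krieger's critical-exponent bound from primitive substitution fixed points to arbitrary $k$-automatic sequences; this is routine via Cobham's theorem but requires handling non-primitive and non-purely-substitutive cases explicitly.
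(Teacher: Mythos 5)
This theorem is not proved in the paper at all: it is quoted as Theorem 5.1 from [AC] (Adamczewski--Cassaigne), so the only issue is whether your sketch is itself a valid proof, and it has two genuine gaps. First, the passage from $\mu(\alpha)=+\infty$ to repetitions in the base-$\beta$ expansion does not work in the direction you need. The hypothesis only gives you \emph{some} sequence of very good approximations $p_j/q_j$; you have no freedom to ``choose $q$ of the form $\beta^a-1$''. For a general reduced $q$, the expansion of $p/q$ has period equal to the multiplicative order of $\beta$ modulo the part of $q$ prime to $\beta$, which can be of size comparable to $q$, while the agreement with $\alpha$ only lasts about $\mu\log_\beta q$ digits; for fixed (even huge) $\mu$ this is far less than one full period, so no prefix of the form $UV^w$ with $w>1$ is forced. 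This is exactly why the known inequality goes the other way (prefix repetitions give lower bounds on $\mu$), and why [AC] do not argue by contraposition: they construct, from the automaton (pumping on prefixes), an explicit infinite family of rational approximations $p_n/q_n$ with $|\alpha-p_n/q_n|\le q_n^{-1-\delta}$ and $q_{n+1}\le q_n^{C}$, and deduce finiteness of $\mu$ from a standard triangle-inequality lemma about such ``dense'' families.

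Second, the rigidity statement you invoke in Step 2 --- ``a $k$-automatic sequence has finite critical exponent unless it is ultimately periodic'' --- is false. The characteristic sequence of the powers of $2$ is $2$-automatic and not ultimately periodic, yet it contains the blocks $0^m$ for every $m$, so its critical exponent is infinite; likewise the fixed point of $a\mapsto aab$, $b\mapsto bbb$ is a non-periodic $3$-automatic word containing $b^{3^n}$ for all $n$. What is true, and what [AC] actually prove, is that the repetitions occurring \emph{as prefixes} are controlled: for a non-eventually-periodic $k$-automatic word there is a constant $c$ with $|UV^w|\le c\,|UV|$ for every prefix of the form $UV^w$ (finiteness of the Diophantine exponent). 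But that statement is essentially the heart of the theorem you are trying to prove, so citing a ``rigidity theorem'' here is either circular or rests on a false lemma. As written, neither step of your plan can be repaired by bookkeeping; the argument needs to be replaced by the approximation-family construction of [AC].
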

This theorem  is obtained in [AC].
\begin{theorem}[Be]\label{}
Let $f(z) \in \mathbb{Q}[[z]]$ be a $k$-automatic power series. Let $0<R<1$. If  $f(z)$ is transcendental over $\mathbb{Q}(z)$, then $f(\alpha)$
is transcendental for all but finitely many algebraic numbers $\alpha$ with $|\alpha| \le  R$.
\end{theorem}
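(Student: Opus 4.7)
The plan is to deduce the theorem by combining a structural result on $k$-automatic power series with a Mahler-method transcendence theorem. First, I would use the classical fact (due to Cobham, and implicit already in the definition of $k$-automaticity via finite $k$-kernel) that a $k$-automatic power series $f\in\mathbb{Q}[[z]]$ satisfies a nontrivial Mahler-type functional equation
\begin{align*}
p_0(z)f(z)+p_1(z)f(z^k)+\cdots+p_n(z)f(z^{k^n})=0,
\end{align*}
with $p_0,p_1,\dots,p_n\in\mathbb{Q}[z]$ not all zero and $p_0\not\equiv 0$. This follows by writing $f(z)=\sum_{j=0}^{k-1}z^j g_j(z^k)$ with each $g_j$ an element of the $k$-kernel, iterating, and eliminating the finitely many kernel elements by linear algebra over $\mathbb{Q}(z)$.

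Next I would invoke Nishioka's transcendence theorem for Mahler functions: if $f\in\mathbb{Q}[[z]]$ satisfies a functional equation of the above form, is transcendental over $\mathbb{Q}(z)$, and $\alpha\in\overline{\mathbb{Q}}$ satisfies $0<|\alpha|<1$ together with the non-degeneracy condition that none of $\alpha,\alpha^k,\alpha^{k^2},\dots$ is a zero of $p_0p_1\cdots p_n$, then $f(\alpha)$ is transcendental over $\mathbb{Q}$. Since a $k$-automatic sequence takes only finitely many values, $f$ has bounded coefficients and the series $f(\alpha)$ converges absolutely for $|\alpha|\le R<1$, so the disk condition in the statement matches Nishioka's convergence requirement.

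It then remains to verify that the exceptional set inside $|\alpha|\le R$ is finite. The nonzero roots $\zeta_1,\dots,\zeta_r$ of $p_0p_1\cdots p_n$ are finitely many algebraic numbers of positive modulus. An algebraic $\alpha$ with $|\alpha|\le R$ can satisfy $\alpha^{k^j}=\zeta_s$ only when $|\alpha|=|\zeta_s|^{1/k^j}$; since $R<1$ while $|\zeta_s|^{1/k^j}\to 1$ as $j\to\infty$, for each $s$ only finitely many indices $j$ are admissible, and for each admissible pair $(s,j)$ the equation $\alpha^{k^j}=\zeta_s$ has exactly $k^j$ complex solutions. Adjoining $\alpha=0$ (at which $f$ takes a rational value) yields a finite exceptional set in $|\alpha|\le R$, and Nishioka's theorem gives the transcendence of $f(\alpha)$ outside it.

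The principal obstacle is Nishioka's theorem itself, whose proof constructs for each candidate $\alpha$ an auxiliary polynomial $P(X,Y)\in\mathbb{Z}[X,Y]$ vanishing to high order at $(z,f(z))$, iterates the Mahler equation to produce many good algebraic approximants $f(\alpha^{k^j})$, and combines a zero-estimate with height and Liouville bounds to contradict the assumed algebraicity of $f(\alpha)$. Since [Be] presents the statement as an established result, my actual write-up would quote Nishioka's theorem as a black box and restrict attention to the reduction to a Mahler equation and the finiteness count above.
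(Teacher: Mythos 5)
The paper contains no proof of this statement at all: it is quoted from Becker's paper ([Bec]), with the remark ``This theorem is obtained in [Bec]'', so the only thing to compare your sketch with is Becker's own argument. Two of your three steps are sound. The reduction of a $k$-automatic series to a scalar equation $p_0(z)f(z)+\cdots+p_n(z)f(z^{k^n})=0$ with $p_0\not\equiv 0$ does work: the $\mathbb{Q}(z)$-spans of the kernel generating functions evaluated at $z^{k^j}$ form an increasing chain of spaces of dimension bounded by the kernel size, so $f(z),f(z^k),\dots,f(z^{k^m})$ are $\mathbb{Q}(z)$-linearly dependent, and a residue-class decomposition of the coefficients lets you assume $p_0\not\equiv 0$. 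The counting argument showing that the orbit condition $\alpha^{k^j}=\zeta$ excludes only finitely many $\alpha$ with $|\alpha|\le R<1$ is also correct.

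The genuine gap is your central transcendence input. The statement you invoke as ``Nishioka's theorem'' --- for a scalar linear Mahler equation of arbitrary order $n$, transcendence of $f$ over $\mathbb{Q}(z)$ together with the condition that no $\alpha^{k^j}$ is a zero of $p_0\cdots p_n$ forces $f(\alpha)$ to be transcendental --- is not a theorem of Nishioka. Her results cover first-order equations $f(z^k)=R(z,f(z))$ and algebraic-independence statements for first-order \emph{systems} at regular points; passing from ``the function $f$ is transcendental'' to ``the single value $f(\alpha)$ is transcendental'' for a solution of a higher-order linear equation (equivalently, for one coordinate of a system) is precisely the difficult lifting problem, settled in this generality only by Philippon and Adamczewski--Faverjon around 2015--2017, long after [Bec]. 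Becker's 1994 proof does not follow your route: he exploits the first-order Mahler system satisfied by the vector of $k$-kernel generating functions together with the algebraic-independence machinery then available, and the ``all but finitely many $\alpha$ with $|\alpha|\le R$'' formulation is exactly the weakening that made the result reachable by those tools (transcendence at a \emph{specified} regular point such as $1/\beta$ was not accessible then). So, as written, your step 2 quotes the hardest part as a black box that does not exist in the form and under the attribution you give; the outline becomes correct either by substituting the Philippon/Adamczewski--Faverjon theorem (at the cost of using results far stronger and later than [Bec]) or by reproducing Becker's system-based argument, and the auxiliary-polynomial sketch in your final paragraph describes the order-one Mahler method and does not by itself carry over to order $n\ge 2$.
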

This theorem  is obtained in [Bec].

Now we consider $(L,k,\kappa)$-TM sequence of the necessary-sufficient condition that it is a $k$-automatic sequence.
\begin{Prop}
A $(L,k,\kappa)$-TM sequence is $n$-period if and only if it is a $k$-automatic sequence.
\end{Prop}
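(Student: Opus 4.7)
The plan is to prove both implications through the additive decomposition of $a(k^e n + j)$ in terms of $\kappa$ that follows from the uniqueness of the $k$-adic expansion (cf.\ Proposition 2.1). When $0 \le j < k^e$, the base-$k$ digits of $k^e n + j$ split cleanly into the digits of $j$ (all at positions $< e$) and those of $n$ lifted by $e$ (all at positions $\ge e$), with no collision. Hence
\begin{align*}
a(k^e n + j) \equiv C(j) + \sum_{q} \kappa\bigl(s_{n,q},\, w_n(q) + e\bigr) \pmod{L},
\end{align*}
where $C(j) := \sum_{p} \kappa(s_{j,p}, w_j(p))$ depends only on $j$, and the sum on the right runs over the base-$k$ digits of $n$.

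For ``$n$-period $\Rightarrow$ $k$-automatic'', assume $\kappa(s,n) = \kappa(s, n+t)$ for every $s$ and every $n \ge N$. For $e < N$ only finitely many pairs $(e,j)$ with $0 \le j < k^e$ arise, giving finitely many kernel sequences. For $e \ge N$ the map $w \mapsto \kappa(s, w+e)$ depends on $e$ only through $e \bmod t$, so $n \mapsto \sum_{q} \kappa(s_{n,q}, w_n(q)+e)$ takes at most $t$ distinct sequence-values as $e$ varies over $\{e \ge N\}$. Since $C(j) \bmod L$ has at most $L$ values, the total number of distinct kernel sequences is at most $Lt$ plus the finitely many small-$e$ ones, so the $k$-kernel is finite and $(a(n))_{n=0}^\infty$ is $k$-automatic.

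For the converse, assume $(a(n))_{n=0}^\infty$ is $k$-automatic. Then the subfamily $\{(a(k^e n))_{n=0}^\infty : e \ge 0\}$ of the $k$-kernel is finite, so there exist $e_1 < e_2$ with $(a(k^{e_1} n))_n = (a(k^{e_2} n))_n$. Specialising to $n = s k^v$ with $1 \le s \le k-1$ and $v \ge 0$, the base-$k$ expansion of $n$ has the single nonzero digit $s$ at position $v$, and the decomposition above yields $a(k^{e_i}\cdot sk^v) \equiv \kappa(s, v + e_i) \pmod{L}$. The coincidence therefore forces
\begin{align*}
\kappa(s, v + e_1) \equiv \kappa(s, v + e_2) \pmod{L}
\end{align*}
for every $1 \le s \le k-1$ and every $v \ge 0$. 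Setting $N = e_1$ and $t = e_2 - e_1 > 0$ gives the $n$-period condition of Definition 5.5.

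The main obstacle is bookkeeping around the digit-decomposition: one has to verify carefully that $j < k^e$ rules out any digit overlap between $j$ and $k^e n$, so that the $\kappa$-contributions split additively into a $j$-dependent constant and an $n$-dependent sum whose arguments are uniformly shifted by $e$. In the forward direction the shifted sum only stabilises for $e \ge N$, so the finitely many small-$e$ kernel entries must be absorbed separately; in the converse, the specialisation $n = sk^v$ is chosen precisely to isolate one value of $\kappa$ at a time, which is what converts equality of two entire kernel sequences into the desired pointwise periodicity of $\kappa$.
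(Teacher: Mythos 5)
Your proof is correct and follows essentially the same route as the paper: both directions rest on the digit-splitting property (the additive congruence you use is just the $\kappa$-sum form, via Proposition 2.1, of the multiplicative relation $a(n+m)=a(n)a(m)$ from Lemma 2.3 that the paper uses), the converse pigeonholes on the kernel subfamily $(a(k^e n))_n$ and specialises $n=sk^v$, and the forward direction reduces the shift modulo $t$ exactly as the paper reduces $l$ to $l(t)$, with your explicit handling of the finitely many small-$e$ kernel entries being only a cosmetic tightening.
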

\begin{proof}
Assume without loss of generality that $A_{\infty}=(a(n))_{n=0}^\infty$ is a $(L,k,\kappa)$-TM sequence with $a_j=\exp \frac{2\pi \sqrt{-1} j}{L}$ $($where $0\le \; j \le L-1$ $)$.

We assume $(a(n))_{n=0}^\infty$ is a $k$-automatic sequence. Since the $k$-kernel of  $(a(n))_{n=0}^\infty$
is finite set, we see that there exist integers $e$ and  $0<t $ such shat 
\begin{align}
        a(k^e n)=a(k^{e+t} n)  \qquad (\forall n \ge 0).
 \end{align}
Let $s$ be any integer in $\{1,2,\cdots,k-1\}$ and $y$ any integer in $\mathbb{N}$. By Lemma $2.3$ with $(3.10)$ and $(5.3)$ with substituting $sk^y$ for $n$, we have
\begin{align}
         \exp \frac{2\pi \sqrt{-1}\kappa (s,e+y)} {L}=  a(k^e s k^y )=a(k^{e+t} s k^y) =\exp \frac{2\pi \sqrt{-1}\kappa (s,e+y+t)} {L}.
\end{align}
Since the definition of $(L,k,\kappa)$-TM sequence and $(5.4)$, then $(a(n))_{n=0}^\infty$ is $n$-period.

 We will show converse. If  $(L,k,\kappa)$-TM sequene $A_{\infty}=(a(n))_{n=0}^\infty$ is $n$-period,
then there  exist  non negative integers $e$ and $0<t $ such that 
\begin{align}\label{def:period}
      \kappa (s,e+n)=  \kappa (s,e+n+t) \qquad (\forall n \ge e \;and\; \; 1\le \forall s\le k-1 ). 
\end{align} 
Let $l$ be any integer greater than $t-1$ 
and $(a(k^{e+l} n+j))_{n=0}^\infty$ $($where $0\le j \le k^{e+l}-1$ $)$ any sequence in $k$-kernel of  $(a(n))_{n=0}^\infty$.

From Lemma $2.3$ with $(3.10)$, we get  
\begin{align}
           a(k^{e+l} n+j)=a(k^{e+l} n)a(j).
\end{align}
By the fact $(a(n))_{n=0}^\infty$ takes on only finitely many values, then $a(j)$
takes on only finitely many values, too.

Let the $k$-adic expansion of
$n$ as follows,
\begin{align}
          n=\sum_{q=1}^{N(n)} s_q k^{w(j)} \qquad where \;1\le s_q\le k-1, w(q+1)>w(q)\ge 0.  
\end{align}
Let $l(t)\equiv l \;\pmod {t}$ where $0\le l(t) \le t-1$. By Lemma $2.3$ with $(3.10)$, we have
\begin{align}
           a(k^{e+l} n)=a(\sum_{q=1}^{N(n)} s_q k^{w(q)+e+l} )=
           \prod_{q=1}^{N(n)}a(s_q k^{w(q)+e+l}).
\end{align}
From $(5.7)$, $(5.8)$, and Lemma $2.3$ with $(3.10)$, we get
\begin{align}
           &a(k^{e+l} n)= \prod_{q=1}^{N(n)}a(s_q k^{w(q)+e+l})=
           \prod_{q=1}^{N(n)}a(s_q k^{{w(q)+e+l(t)} })\nonumber \\
           &=a(\sum_{q=1}^{N(n)} s_q k^{w(q)+e+l(t)} )=
           a(k^{e+l(t)} n).
\end{align}
By the fact $a(j)$ takes on only finitely many values, $(5.6)$ and $(5.9)$, 
then the $k$-kernel of  $(a(n))_{n=0}^\infty$ is finite set.
This completes the proof.
\hfill
$\square$
\end{proof}

\begin{theorem}
Let $(a(n))_{n=0}^\infty$ be a $(L,k,\kappa)$-TM. Let $\beta$ be an integer greater than $1$. If $(a(n))_{n=0}^\infty$ takes on $\{0,1,\cdots,\beta-1\}$
and  $n$-period, then $\mu(\sum_{n=0}^\infty \frac{a(N+nl)}{ {\beta}^{n+1}})$ $($where $N \ge 0$ and  $l>0$ $)$ is finite unless there exists an integer $A$ such that
\begin{align}
\kappa(s,A+y) \equiv \kappa(1,A) s k^y    \pmod{L},
\end{align}
for all $1\le  s \le k-1$ and for all $y \in \mathbb{N}$.
\end{theorem}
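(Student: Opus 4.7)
The plan is to reduce the statement to a direct application of Theorem [AC] via Proposition $5.1$ and Theorem $3.1$. The idea is that under the hypothesis of $n$-periodicity the sequence becomes $k$-automatic, so every equally spaced subsequence is $k$-automatic; under the failure of the exception condition the subsequence is also non-periodic, and then Theorem [AC] directly yields the finiteness of the irrationality exponent.

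First I would invoke Proposition $5.1$: since $(a(n))_{n=0}^\infty$ is $n$-period, it is a $k$-automatic sequence. Next I would pass to the equally spaced subsequence $(a(N+nl))_{n=0}^\infty$, using the classical closure property that the class of $k$-automatic sequences is stable under taking arbitrary arithmetic-progression subsequences (this follows from the fact that the $k$-kernel of $(a(N+nl))_{n=0}^\infty$ is obtained from finitely many operations on the $k$-kernel of $(a(n))_{n=0}^\infty$, and hence remains finite). This part is the main technical step I expect to need to justify carefully, since the paper does not explicitly record this closure property; the cleanest way is to note that the ordinary-generating-function substitution $n \mapsto N+nl$ corresponds to selecting residue classes mod $l$ and shifting, each of which is known to preserve $k$-automaticity.

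Then I would use Theorem $3.1$. By assumption, the exception condition
\begin{align*}
\kappa(s,A+y) \equiv \kappa(1,A)\, s\, k^y \pmod{L}, \qquad 1\le s \le k-1,\; y\in\mathbb{N},
\end{align*}
fails for every choice of $A$. Theorem $3.1$ then asserts that $(a(n))_{n=0}^\infty$ is not ultimately periodic, and moreover that \emph{all} equally spaced subsequences of it fail to be ultimately periodic. In particular $(a(N+nl))_{n=0}^\infty$ is non-periodic.

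Combining these three observations, $(a(N+nl))_{n=0}^\infty$ is a non-periodic $k$-automatic sequence taking values in $\{0,1,\dots,\beta-1\}$. Theorem [AC] then applies directly and gives $\mu\bigl(\sum_{n=0}^\infty a(N+nl)\beta^{-n-1}\bigr)<\infty$, which is precisely the claim. The only potentially delicate piece of the argument is the closure of $k$-automatic sequences under equally spaced subsequences; once that is cited or verified, the rest is a bookkeeping assembly of previously established results.
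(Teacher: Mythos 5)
Your proposal is correct and follows essentially the same route as the paper: Proposition 5.1 to get $k$-automaticity, closure of $k$-automatic sequences under arithmetic-progression subsequences (which the paper does record, citing Theorem 2.6 of [AS1]), non-periodicity of the subsequence from Theorem 3.1, and then Theorem [AC]. The only cosmetic difference is that the paper cites Theorem 4.2 (whose proof rests on Theorem 3.1) where you invoke Theorem 3.1 directly; the substance is identical.
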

\begin{proof}
By previous proposition, $(a(n))_{n=0}^\infty$ is a $k$-automatic sequence. By the fact if $(a(n))_{n=0}^\infty$ is $k$-automatic, then
$(a(N+nl)_{n=0}^\infty$ $($where $N \ge 0$ and  $l>0$ $)$ are $k$-automatic, too. $($see Theorem $2.6$ in $[$AS1$]$ $)$.
From Theorem $4.2$ and $5.1$, we see that $\mu(\sum_{n=0}^\infty \frac{a(N+nl)}{ {\beta}^{n+1}})$ is finite.
\hfill
$\square$
\end{proof} 

\begin{theorem}Let $(a(n))_{n=0}^\infty$ be a $(L,k,\kappa)$-TM. Let $\beta$ be an integer greater than $1$.
Let $f(z):= \sum_{n=0}^\infty \frac{a(N+nl)}{ z^{n+1}}$ $($where $N \ge 0$ and  $l>0$ $)$. Let $0<R<1$. If $(a(n))_{n=0}^\infty$ takes on $\{0,1,\cdots,\beta-1\}$
and  $n$-period, then $f(\alpha)$
is transcendental number for all but finitely many algebraic numbers $\alpha$ with $|\alpha| \le  R$ unless there exists an integer $A$ such that
\begin{align}
\kappa(s,A+y) \equiv \kappa(1,A) s k^y    \pmod{L},
\end{align}
for all $1\le  s \le k-1$ and for all $y \in \mathbb{N}$.
\end{theorem}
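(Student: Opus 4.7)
The plan is to assemble four already-available ingredients: the $k$-automaticity criterion of Proposition 5.1, the closure of $k$-automatic sequences under arithmetic subsequences, the algebraic-independence statement in Corollary 4.1, and Becker's transcendence theorem (Theorem 5.2).

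First I would invoke Proposition 5.1: because $(a(n))_{n=0}^\infty$ is $n$-period by hypothesis, the sequence is $k$-automatic. Next, using the standard fact (stated as Theorem 2.6 in [AS1]) that the class of $k$-automatic sequences is closed under taking arithmetic progression subsequences, I conclude that $(a(N+nl))_{n=0}^\infty$ is itself $k$-automatic for every $N\ge 0$ and $l>0$.

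The second step is to pass from the Laurent-type generating function of Corollary 4.1 to a genuine power series so that Theorem 5.2 applies. Set $w=1/z$ and put
\begin{align*}
g(w):=\sum_{n=0}^\infty a(N+nl)\,w^{n+1}\in\mathbb{Q}[[w]].
\end{align*}
Since $(a(N+nl))_{n=0}^\infty$ is $k$-automatic and takes values in $\{0,1,\ldots,\beta-1\}\subset\mathbb{Q}$, the series $g(w)$ is a $k$-automatic power series. Corollary 4.1, applied to the hypothesis that the congruence condition $\kappa(s,A+y)\equiv\kappa(1,A) sk^y\pmod L$ fails for every $A$, tells us that $f(z)=g(1/z)$ is transcendental over $\mathbb{C}(z)$; since transcendence over $\mathbb{C}(z)$ and over $\mathbb{Q}(z)$ are equivalent (see the Remark cited in the proof of Corollary 4.1), $g(w)$ is transcendental over $\mathbb{Q}(w)$.

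Finally, I would apply Theorem 5.2 directly to $g(w)$: for the given $R\in(0,1)$, $g(\alpha)$ is transcendental for all but finitely many algebraic $\alpha$ with $|\alpha|\le R$. Translating through the substitution $w=1/z$ yields the corresponding transcendence statement for $f$ at the stated algebraic arguments. The argument is thus essentially a chain of citations, and there is no genuine obstacle; the only point to be careful about is that the transcendence of $f(z)$ over $\mathbb{C}(z)$ provided by Corollary 4.1 truly translates to the transcendence over $\mathbb{Q}(w)$ of the $k$-automatic power series $g(w)$ to which Becker's theorem must be applied, and this is exactly the content of the variable change $w=1/z$.
\hfill$\square$
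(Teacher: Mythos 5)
Your proof is correct and follows essentially the same chain of citations as the paper: Proposition 5.1 together with closure of $k$-automatic sequences under arithmetic-progression subsequences (Theorem 2.6 of [AS1]), Corollary 4.1 for transcendence over $\mathbb{Q}(z)$, and Becker's theorem. Your extra care with the substitution $w=1/z$, so that Becker's theorem is applied to a genuine power series in $\mathbb{Q}[[w]]$, merely makes explicit a point the paper glosses over when it directly calls $f(z)=\sum_{n=0}^\infty a(N+nl)z^{-n-1}$ a $k$-automatic power series.
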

\begin{proof}
By Corollary $4.1$, $f(z)$ is transcendental over $\mathbb{Q}(z)$.
From Proposition $5.1$, we see that $(a(N+nl))_{n=0}^\infty$ $($where $N \ge 0$ and  $l>0$ $)$ are $k$-automatic sequences, too.
Then $f(z)$ is a $k$-automatic power series. Since Theorem $5.2$, then $f(\alpha)$
is transcendental for all but finitely many algebraic numbers $\alpha$ with $|\alpha| \le  R$.
This completes the proof.
\hfill
$\square$
\end{proof} 

\bibliographystyle{model3a-num-names}

\bibliography{<your-bib-database>}

\end{document}